\def\graybox(#1,#2){
\x=#1 \y=#2 
\z=\x \t=\y
\advance\z by 10 
\advance\t by 10 
\psframe[fillstyle=solid,fillcolor=lightgray,linewidth=0pt](\x,\y)(\z,\t) 
}
\def\emptygraybox(#1,#2){
\x=#1 \y=#2 
\z=\x \t=\y
\advance\z by 10 
\advance\t by 10 
\psframe[fillstyle=solid,fillcolor=lightgray,linewidth=0pt,linecolor=lightgray](\x,\y)(\z,\t)}
\def\blankbox(#1,#2){
\x=#1 \y=#2 
\z=\x \t=\y
\advance\z by 10 
\advance\t by 10 
\psframe[linewidth=0pt](\x,\y)(\z,\t)}
    \newcommand{\define}{\textbf}
	\newcommand{\PP}{\mathbb{P}}
	\newcommand{\ZZ}{\mathbb{Z}}	
	\newcommand{\CC}{\mathbb{C}}	
	\newcommand{\EE}{\mathbb{E}}	
	\newcommand{\BB}{\mathbb{B}}	
	\newcommand{\B}{\BB}	
	\newcommand{\E}{\EE}	
	\newcommand{\Z}{\ZZ}	
	\newcommand{\C}{\CC}	
	\renewcommand{\P}{\PP}	
	\renewcommand{\O}{\Oo}	
	\newcommand{\Ee}{\mathscr{E}}	
	\newcommand{\Gg}{G}
	\newcommand{\Ff}{\mathscr{F}}	
	\newcommand{\Oo}{\mathscr{O}}	
	\newcommand{\Nn}{N}
	\newcommand{\OOmega}{\mathbf{\Omega}}
	\DeclareMathOperator{\rank}{rank}
	\DeclareMathOperator{\codim}{codim}
	\newcommand{\Gr}{Gr}
	\newcommand{\GGr}{\mathbf{Gr}}
	\newcommand{\Fl}{Fl}
	\newcommand{\isom}{\simeq}
	\newcommand{\pt}{\mathrm{pt}}
	\renewcommand{\phi}{\varphi}
	\renewcommand{\epsilon}{\varepsilon}
	\renewcommand{\tilde}{\widetilde}
    \renewcommand{\setminus}{\smallsetminus}
	\newtheorem{theorem}{Theorem}[section]
	\newtheorem{lemma}[theorem]{Lemma}
	\newtheorem{proposition}[theorem]{Proposition}
	\newtheorem{corollary}[theorem]{Corollary}
	\newtheorem{definition}[theorem]{Definition}
	\newtheorem{remark}[theorem]{Remark}
	\newtheorem{example}[theorem]{Example}
	\newtheorem{exercise}[theorem]{Exercise}
	\newtheorem{claim}{Claim}
	\newtheorem{problem}[theorem]{Problem}
	\newcommand{\qed}{\hfill \mbox{\raggedright \rule{.07in}{.07in}}}
	\newenvironment{proof}
	   {\medskip \noindent \textit{Proof.}~}
	   {\hfill \qed \medskip}
	\newenvironment{proofof}[1]
	   {\medskip \noindent \textit{#1.}~}
	   {\hfill \qed \medskip}
\begin{document}

\title{Introduction to Equivariant Cohomology\\ in Algebraic Geometry}
\author{Dave Anderson\thanks{Partially supported by NSF Grant DMS-0902967. \newline \indent \textit{2010 Mathematics Subject Classification.} Primary 14F43; Secondary 14M15, 14N15, 05E05. \newline \indent \textit{Key words and phrases.} equivariant cohomology, localization, Grassmannian, Schubert variety, Schur function.} }
\date{November 30, 2011}
\maketitle

\begin{abstract}
Introduced by Borel in the late 1950's, equivariant cohomology encodes information about how the topology of a space interacts with a group action.  Quite some time passed before algebraic geometers picked up on these ideas, but in the last twenty years, equivariant techniques have found many applications in enumerative geometry, Gromov-Witten theory, and the study of toric varieties and homogeneous spaces.  In fact, many classical algebro-geometric notions, going back to the degeneracy locus formulas of Giambelli, are naturally statements about certain equivariant cohomology classes.

These lectures survey some of the main features of equivariant cohomology at an introductory level.  The first part is an overview, including basic definitions and examples.  In the second lecture, I discuss one of the most useful aspects of the theory: the possibility of localizing at fixed points without losing information.  The third lecture focuses on Grassmannians, and describes some recent positivity results about their equivariant cohomology rings.
\end{abstract}

\pagebreak

\tableofcontents

\section{Lecture One: Overview}\label{s:lec1}

A general principle of mathematics holds that one should exploit symmetry to simplify a problem whenever possible.  A common manifestation of symmetry is the action of a Lie group $G$ on a topological space $X$ --- and when one is interested in understanding the cohomology ring $H^*X$, the \emph{equivariant cohomology} $H_G^*X$ is a way of exploiting this symmetry.

Topologists have long been interested in a sort of converse problem: given some topological (or cohomological) information about $X$, what can one say about the kinds of group actions $X$ admits?  For example, must there be fixed points?  How many?  It was in this context that Borel originally defined what is now called \emph{equivariant cohomology}, in his 1958--1959 seminar on transformation groups \cite{borel}.

The goal of these lectures is to give a quick introduction to equivariant cohomology in the context 
of algebraic geometry.  We will review the basic properties of $H_G^*X$ and give some examples of applications.

\bigskip

\noindent
{\it Ackowledgements.}  I learned much of what I know about equivariant cohomology from William Fulton, and the point of view presented here owes a debt to his lectures on the subject.  I am grateful to the organizers of IMPANGA for arranging the excellent conference in which these lectures took place.  These notes were assembled with the help of Piotr Achinger, and I thank him especially for assistance in typing, researching literature, and clarifying many points in the exposition.  I also thank the referee for valuable input and careful reading.

\subsection{The Borel construction}

Let $\Gg$ be a complex linear algebraic group, and let $X$
be a complex algebraic variety with a left $\Gg$-action.  
The construction Borel introduced in \cite{borel} goes as follows.  Find a contractible space $\EE\Gg$ with a free (right) $\Gg$-action.  (Such spaces exist, and are universal in an appropriate homotopy category; we will see concrete examples soon.)  Now form the quotient space
\[
  \EE \Gg \times^\Gg X := \EE \Gg \times X / (e\cdot g, x)\sim (e, g\cdot x).
\]

\begin{definition}\label{def:eq-coh}
The \define{equivariant cohomology} of $X$ (with respect to $G$) is the (singular) cohomology of $\E G \times^G X$:
\[
  H_G^*X := H^*(\E G\times^G X).
\]
(We always use singular cohomology with $\Z$ coefficients in these notes.)
\end{definition}

The idea behind the definition is that when $X$ is a \emph{free} $G$-space, we should have $H_G^*X = H^*(G\backslash X)$.  To get to this situation, we replace $X$ with a free $G$-space of the same homotopy type.  From a modern point of view, this is essentially the same as taking the cohomology of the quotient stack $[G\backslash X]$ (see, e.g., \cite{behrend}).

General facts about principal bundles ensure that this definition is independent of the choice of space $\E G$; for instance, $\BB \Gg = \EE \Gg/\Gg$ is unique up to homotopy.

When $X$ is a point, $\EE \Gg \times^\Gg \{\pt\} = \B \Gg$ usually has nontrivial topology, so $H_\Gg^*(\pt) \neq \Z$!  This is a key feature of equivariant cohomology, and $H_\Gg^*(\pt)=H^*\BB\Gg$ may be interpreted as the ring of characteristic classes for principal $\Gg$-bundles.  Other functorial properties are similar to ordinary cohomology, though.  (In fact, $H_\Gg^*(-)$ is a generalized cohomology theory, on a category of reasonable topological spaces with a left $\Gg$-action.)

\begin{example}
Let $\Gg=\C^*$.  The space $\EE\Gg = \CC^\infty\setminus\{0\}$ is contractible, and $\Gg$ acts freely, so $\BB G = \EE\Gg/\Gg = \PP^\infty$.  We see that
\[
  H_{\CC^*}^*(\pt) = H^*\PP^\infty \isom \ZZ[t],
\]
where $t = c_1(\Oo_{\PP^\infty}(-1))$ is the first Chern class of the tautological bundle.
\end{example}





\subsection{Approximation spaces}

The spaces $\EE \Gg$ and $\BB \Gg$ are typically infinite-dimensional, so they are not algebraic varieties.  (This may partly account for the significant lag before equivariant techniques were picked up by algebraic geometers.)  However, there are finite-dimensional, nonsingular algebraic varieties $\EE_m \to \BB_m = \EE_m/\Gg$ which serve as ``approximations'' to $\EE \Gg \to \BB \Gg$.  This works because of the following lemma:

\begin{lemma}\label{l:approx}
Suppose $\EE_m$ is any (connected) space with a free right $G$-action, and $H^i\EE_m = 0$ for $0<i<k(m)$ (for some integer $k(m)$).  Then for any $X$, there are natural isomorphisms
\[
  H^i(\EE_m \times^\Gg X) \isom H^i(\EE \Gg \times^\Gg X) =: H_\Gg^iX,
\]
for $i<k(m)$.
\end{lemma}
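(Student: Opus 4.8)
The plan is to compare $\EE_m \times^\Gg X$ and $\EE\Gg \times^\Gg X$ through an auxiliary space mapping to both. Let $\Gg$ act diagonally on $\EE_m \times \EE\Gg$; this action is still free, so we may form $(\EE_m \times \EE\Gg)\times^\Gg X$, and the two $\Gg$-equivariant projections $\EE_m \times \EE\Gg \to \EE_m$ and $\EE_m \times \EE\Gg \to \EE\Gg$ induce maps
\[
  \EE_m \times^\Gg X \xleftarrow{\ q_1\ } (\EE_m \times \EE\Gg)\times^\Gg X \xrightarrow{\ q_2\ } \EE\Gg \times^\Gg X.
\]
It suffices to show that $q_1^*$ is an isomorphism on $H^i$ for all $i$ and that $q_2^*$ is an isomorphism on $H^i$ for $i < k(m)$; composing then yields the asserted isomorphism $H_\Gg^i X = H^i(\EE\Gg\times^\Gg X) \isom H^i(\EE_m\times^\Gg X)$ for $i<k(m)$.

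First I would verify that $q_1$ and $q_2$ are locally trivial fiber bundles. Using a local trivialization of the principal $\Gg$-bundle $\EE_m \to \BB_m$ over an open set $U$, one identifies $\EE_m\times^\Gg X$ over $U$ with $U\times X$ and $(\EE_m\times\EE\Gg)\times^\Gg X$ over $U$ with $U\times X\times\EE\Gg$, so that $q_1$ becomes the projection forgetting the $\EE\Gg$ factor; thus $q_1$ is a fiber bundle with fiber $\EE\Gg$. Symmetrically, trivializing $\EE\Gg\to\BB\Gg$ shows $q_2$ is a fiber bundle with fiber $\EE_m$. Since $\EE\Gg$ is contractible, the Serre spectral sequence of $q_1$ has $E_2^{p,q}=H^p(\EE_m\times^\Gg X)$ for $q=0$ and vanishes otherwise, so it collapses and $q_1^*$ is an isomorphism on all cohomology.

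It then remains to analyze $q_2$. Its fiber $\EE_m$ is connected with $\tilde H^q\EE_m = 0$ for $q<k(m)$ by hypothesis. In the Serre spectral sequence of $q_2$ we have $E_2^{p,q}=H^p\bigl(\EE\Gg\times^\Gg X;\,\mathcal{H}^q(\EE_m)\bigr)$; the coefficient system is the trivial module $\Z$ in degree $q=0$ (the fiber being connected), and $E_2^{p,q}=0$ for $0<q<k(m)$. Hence in each total degree $i<k(m)$ only the bottom-row entry $E_2^{i,0}=H^i(\EE\Gg\times^\Gg X)$ can be nonzero, and a quick check of the bidegrees shows no differential can enter or leave $E_r^{i,0}$ in that range, so $H^i\bigl((\EE_m\times\EE\Gg)\times^\Gg X\bigr)\isom E_\infty^{i,0}=E_2^{i,0}$, the isomorphism being the edge map $q_2^*$. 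Naturality in $X$ is automatic since every space and map above is functorial in $X$, and the same comparison applied to two models of $\EE\Gg$ shows the answer is independent of that choice.

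I expect the main obstacle to be the first step: checking carefully that $q_1$ and $q_2$ are genuine locally trivial bundles with the stated fibers requires unwinding the mixed-quotient identifications $(\EE_m\times\EE\Gg)\times^\Gg X$ and invoking local triviality of principal $\Gg$-bundles. Once that is in hand, the rest is the standard ``connectivity of the fiber'' estimate for the Serre spectral sequence.
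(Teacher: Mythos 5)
The paper states Lemma~\ref{l:approx} without giving a proof (it defers to the reference \cite{eq}), so there is no in-text argument to compare against; your proof is the standard one and is correct. The two-sided comparison through $(\EE_m\times\EE\Gg)\times^\Gg X$, the observation that $q_1^*$ is an isomorphism on all of $H^*$ because the fiber $\EE\Gg$ is contractible, and the connectivity estimate for $\EE_m$ in the Serre spectral sequence of $q_2$ (only the bottom row survives in total degree $i<k(m)$, with no differentials in or out of $E_r^{i,0}$ in that range) is exactly how this lemma is established in the sources the paper points to. You correctly identify the one technical point: verifying that $q_1$ and $q_2$ are genuine fibrations. Your local-trivialization sketch handles it --- trivialize the principal bundle $\EE_m\to\BB_m$ over an open $U$ to get $(\EE_m\times^\Gg X)|_U\isom U\times X$ and $((\EE_m\times\EE\Gg)\times^\Gg X)|_U\isom U\times\EE\Gg\times X$ with $q_1$ the obvious projection, and symmetrically for $\EE\Gg\to\BB\Gg$ and $q_2$ --- and the closing remark that running the same comparison with two models of $\EE\Gg$ gives well-definedness is a nice bonus that the lemma statement implicitly relies on.
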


\begin{example}
For $G=\CC^*$, take $\EE_m = \CC^m\setminus\{0\}$, so $\BB_m = \PP^{m-1}$.  Since $\EE_m$ is homotopy-equivalent to the sphere $S^{2m-1}$, it satisfies the conditions of the lemma, with $k(m)=2m-1$ in the above lemma.  Note that $k(m)\to\infty$ as $m\to\infty$, so any given computation in $H_{\Gg}^*X$ can be done in $H^*(\EE_m\times^\Gg X)$, for $m\gg 0$.

We have $\BB_m = \PP^{m-1}$, so $\EE_m\to\BB_m$ is an intuitive choice for approximating $\EE\Gg\to \BB\Gg$.
\end{example}

\begin{example}\label{ex:Tn}
Similarly, for a torus $T\isom(\CC^*)^n$, take
\[
 \EE_m = (\CC^m \setminus \{0\})^{\times n}  \to (\PP^{m-1})^{\times n} = \BB_m.
\]
We see that $H_T^*(\pt) = H_T^*((\PP^\infty)^n) = \ZZ[t_1,\ldots,t_n]$, with $t_i = c_1(\Oo_i(-1))$.  (Here $\O_i(-1))$ is the pullback of $\O(-1)$ by projection on the $i$th factor.)
\end{example}

The above example is part of a general fact: For linear algebraic groups $G$ and $H$, one can always take $\EE(G\times H) = \EE G \times \EE H$.  Indeed, $G\times H$ acts freely (on the right) on the contractible space $\EE G \times \EE H$.

\begin{example}
Consider $\Gg = GL_n$, and let $\EE_m := M^\circ_{m\times n}$ be the set of full
rank $m\times n$ matrices, for $m>n$.  This variety is $k(m)$-connected, for 
$k(m) = 2(m-n)$.  Indeed, $M^\circ_{m\times n}$ is the complement of a closed 
algebraic set of codimension $(m-1)(n-1)$ in $M_{m\times n}$, and it is a general fact 
that $\pi_i(\CC^n \setminus Z)=0$ for $0<i\leq 2d-2$ if $Z$ is a Zariski closed subset 
of codimension $d$.  See \cite[\S A.4]{eq}.  It follows that the maps
\[
  \EE_m \to \BB_m = Gr(n, \CC^m)
\]
approximate $\EE\Gg\to\BB\Gg$, in the sense of Lemma~\ref{l:approx}.  We have
\[
  H^*_\Gg(\pt) = \ZZ[e_1, \ldots, e_n],
\]
where $e_i = c_i(S)$ is the $i$th Chern class of the tautological bundle over the Grassmannian.
\end{example}

Since any linear algebraic group $G$ embeds in some $GL_n$, the above example 
gives a construction of approximations $\EE_m$ that works for arbitrary $G$.

\begin{example}\label{ex:B}
The \emph{partial flag manifold} $\Fl(1,2,\ldots,n;\CC^m)$ parametrizes nested chains of subspaces $E_1 \subset E_2 \subset \cdots \subset E_n \subset \CC^m$, where $\dim E_i=i$.  There is also an infinite version, topologized as the limit taking $m\to\infty$.

If $B\subset GL_n$ is the subgroup of upper-triangular matrices, we have
\begin{diagram}
  \EE_m & = & M^\circ_{m\times n} \\
   \dTo &   &  \dTo \\
  \BB_m & = & \Fl(1,2,\ldots,n;\CC^m),
\end{diagram}
so $\BB B$ is the partial (infinite) flag manifold $\Fl(1,2,\ldots,n;\CC^\infty)$.
\end{example}

\begin{remark}
The idea of approximating the infinite-dimensional spaces $\EE\Gg$ and $\BB\Gg$ by finite-dimensional ones can be found in the origins of equivariant cohomology \cite[Remark XII.3.7]{borel}.  More recently, approximations have been used by Totaro and Edidin-Graham to define equivariant Chow groups in algebraic geometry.
\end{remark}

\subsection{Functorial properties}

Equivariant cohomology is functorial for \emph{equivariant maps}: given a homomorphism $G\xrightarrow{\phi} G'$ and a map $X \xrightarrow{f} X'$ such that $f(g\cdot x) = \phi(g)\cdot f(x)$, we get a pullback map
\[
  f^*\colon H_{G'}^*X' \to H_G^*X,
\]
by constructing a natural map $\E \times^G X \to \E' \times^{G'} X'$.

There are also \emph{equivariant Chern classes} and \emph{equivariant fundamental classes}:

\begin{itemize}
\item If $E \to X$ is an equivariant vector bundle, there are induced vector bundles $\EE_m \times^G E \to \EE_m \times^G X$.  Set
\[
  c^G_i(E) = c_i(\EE_m \times^G X) \in H_G^{2i}X = H^{2i}(\EE_m \times^G X),
\]
for $m\gg 0$.

\item When $X$ is a nonsingular variety, so is $\EE_m \times^G X$.  If $V \subseteq X$ is a $G$-invariant subvariety of codimension $d$, then $\EE_m \times^G V \subseteq \EE_m \times^G X$ has codimension $d$.  We define
\[
  [V]^G = [\EE_m \times^G V] \in H_G^{2d}X = H^{2d}(\EE_m \times^G X),
\]
again for $m\gg 0$.  (Any subvariety of a nonsingular variety defines a class, using e.g. Borel-Moore homology.)
\end{itemize}

In fact, the Chern classes could be defined directly as $c^G_i(E) = c_i(\E G\times^G E)$, but for $[V]^G$ one needs the approximation spaces.  (Of course, one has to check that these definitions are compatible for different $\E_m$'s.)

In the special case $X=\pt$, an equivariant vector bundle $E$ is the same as a representation of $\Gg$.  Associated to any representation, then, we have characteristic classes $c_i^\Gg(E) \in H^{2i}_\Gg (\pt)$.

\begin{example}\label{ex:lb-char}
Let $L_a=\CC$ be the representation of $\CC^*$ with the action
\[
  z\cdot v = z^a v
\]
where $a$ is a fixed integer.  Then 
\[
  \EE_m\times^{\CC^*} L_a \isom \Oo_{\PP^{m-1}}(-a)
\]
as line bundles on $\BB_m=\PP^{m-1}$, so $c_1^{\CC^*}(L_a) = at \in \ZZ[t]$.  (This also explains our choice of generator for $H_{\CC^*}^*(\pt)=\ZZ[t]$: we want $t$ to correspond to the standard representation, $L_1$.)
\end{example}

\begin{example}\label{ex:Cn}
Let $T = (\CC^*)^n$ act on $E=\CC^n$ by the standard action.  Then
\[
  c_i^T(E) = e_i(t_1, \ldots, t_n) \in H_T^*(\pt) = \ZZ[t_1, \ldots, t_n], \]
where $e_i$ is the $i$-th elementary symmetric function.  To see this, note that
\[
  \EE_m\times^T E \isom \Oo_1(-1)\oplus\ldots\oplus\Oo_n(-1)
\]
as vector bundles on $\BB_m=(\PP^{m-1})^n$.
\end{example}

\begin{problem}
Let $T$ be the maximal torus in $GL_n$, and let $E$ be an irreducible polynomial
$GL_n$-module.  The above construction assigns to $E$ its equivariant
Chern classes $c_i^T(E)$, which are \emph{symmetric} polynomials in variables
$t_1, \ldots, t_n$.  What are these polynomials?

Since they are symmetric polynomials, we can write
\[
 c_i^T(E) = \sum_\lambda a_\lambda s_\lambda(t),
\]
where $s_\lambda$ are the {\em Schur polynomials} (which are defined in \S\ref{s:lec3} below).  A theorem of Fulton and Lazarsfeld implies that the integers $a_\lambda$ are in fact {\em nonnegative}, as was observed by Pragacz in \cite[Corollary 7.2]{pragacz}.  This provides further motivation for the problem: we are asking for a combinatorial interpretation of the coefficients $a_\lambda$.
\end{problem}

\subsection{Fiber bundles} 

The formation of $\E\times^G X$ is really an operation of forming a fiber bundle with fiber $X$.  The map $X\xrightarrow{\rho}\pt$ becomes $\E\times^G X \xrightarrow{\rho} \B$ (via the first projection); ordinary cohomology $H^*X$ is an algebra over $H^*(\pt)=\Z$ (i.e., a ring), while $H_G^*X$ is an algebra over$H_G^*(\pt)$; and one can generally think of equivariant geometry as the geometry of bundles.  From this point of view, many statements about equivariant cohomology are essentially equivalent to things that have been known to algebraic geometers for some time---for instance, the Kempf-Laksov formula for degeneracy loci is the same as a ``Giambelli'' formula in $H_T^*\Gr(k,n)$.

\begin{example}[Equivariant cohomology of projective space]\label{ex:Pn}
Let $T = (\CC^*)^n$ act on $\CC^n$ in the usual way, defining an action on $\PP^{n-1}=\PP(\CC^n)$. 
This makes $\Oo_{\PP(\CC^n)}(1)$ a $T$-equivariant line bundle.  Write $\zeta = c_1^T(\Oo_{\PP^{n-1}}(1))$.

\begin{claim}
We have
\begin{align*}
 H_T^*\PP^{n-1} &\isom \ZZ[t_1,\ldots,t_n][\zeta]/(\zeta^n + e_1(t)\zeta^{n-1} + \cdots + e_n(t)) \\
 &= \ZZ[t_1,\ldots,t_n][\zeta]/( \textstyle{\prod_{i=1}^n(\zeta+t_i)} ).
\end{align*}
\end{claim}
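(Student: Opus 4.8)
The plan is to compute $H_T^*\PP^{n-1}$ using the approximation spaces from Example~\ref{ex:Tn} and the classical description of the cohomology of a projectivized vector bundle. Recall that for $T=(\CC^*)^n$ we take $\EE_m=(\CC^m\setminus\{0\})^{\times n}$ and $\BB_m=(\PP^{m-1})^{\times n}$. The key observation is that $\EE_m\times^T\PP^{n-1}$ is the projectivization $\PP(\mathscr{V})$ of the rank-$n$ bundle $\mathscr{V}=\EE_m\times^T\CC^n$ on $\BB_m$, and by Example~\ref{ex:Cn} we have $\mathscr{V}\isom\Oo_1(-1)\oplus\cdots\oplus\Oo_n(-1)$.

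First I would invoke the Leray--Hirsch / projective bundle theorem: for a rank-$n$ vector bundle $\mathscr{V}\to Y$, one has $H^*\PP(\mathscr{V})\isom H^*Y[\zeta]/(\zeta^n+c_1(\mathscr{V})\zeta^{n-1}+\cdots+c_n(\mathscr{V}))$, where $\zeta=c_1(\Oo_{\PP(\mathscr{V})}(1))$ — with the sign convention matching the definition of $\zeta$ as $c_1^T(\Oo_{\PP^{n-1}}(1))$ and the fact that the relevant tautological subbundle of the pulled-back $\mathscr{V}$ is $\Oo_{\PP(\mathscr{V})}(-1)$. Applying this with $Y=\BB_m$, so $H^*Y=\ZZ[t_1,\ldots,t_n]$ truncated in degrees $\geq k(m)$, and substituting $c_i(\mathscr{V})=e_i(t_1,\ldots,t_n)$ from Example~\ref{ex:Cn}, gives
\[
 H^*(\EE_m\times^T\PP^{n-1})\isom \ZZ[t_1,\ldots,t_n][\zeta]/(\zeta^n+e_1(t)\zeta^{n-1}+\cdots+e_n(t))
\]
in degrees below $k(m)$. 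Second, I would let $m\to\infty$: by Lemma~\ref{l:approx} the left side stabilizes to $H_T^*\PP^{n-1}$ in each fixed degree, and since $k(m)\to\infty$ the isomorphism holds in all degrees, yielding the first displayed formula. The second displayed form is just the factorization $\prod_{i=1}^n(\zeta+t_i)=\zeta^n+e_1(t)\zeta^{n-1}+\cdots+e_n(t)$, immediate from the definition of the elementary symmetric functions (note the plus signs, consistent with the sign of $\zeta$).

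The main obstacle is getting the sign conventions right: one must check carefully whether the natural generator is $c_1$ of $\Oo(1)$ or $\Oo(-1)$ on the projectivized bundle, and correspondingly whether the Chern roots of $\mathscr{V}$ are $-t_i$ or $t_i$, so that the relation comes out as $\prod(\zeta+t_i)$ rather than $\prod(\zeta-t_i)$. This amounts to tracking that $\Oo_{\PP(\CC^n)}(1)$ restricted to a $T$-fixed point is the dual of a coordinate line, hence has equivariant first Chern class $-t_i$ on that fixed point — consistent with the stated relation since $\zeta$ must vanish after a suitable substitution. A secondary point, already addressed in the excerpt's discussion of approximation spaces, is the compatibility of the construction across different $m$, which is routine.
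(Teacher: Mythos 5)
Your proof is correct and follows essentially the same route as the paper: identify $\EE_m\times^T\PP^{n-1}$ with $\PP(E)$ for $E=\EE_m\times^T\CC^n$ over $\BB_m$, apply the projective bundle presentation of $H^*\PP(E)$ with $c_i(E)=e_i(t)$, and pass to the limit via Lemma~\ref{l:approx}. Your sign discussion is also consistent with the paper's conventions ($t_i=c_1(\Oo_i(-1))$, so $\iota_{p_i}^*\zeta=-t_i$ and $\prod_{i}(\zeta+t_i)$ restricts to zero at each fixed point), so the relation comes out with the stated signs.
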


\begin{proof}
Pass from the vector space $\CC^n$ to the vector bundle $E = \EE_m\times^T \CC^n$ on $\BB_m$.  We have
\[
  \EE_m\times^T \PP^{n-1} \isom \PP(E) \qquad \text{and} \qquad \EE_m\times^T\Oo_{\PP^{n-1}}(1) \isom \Oo_{\PP(E)}(1),
\]
all over $\BB_m$.  
The claim follows from the well-known presentation of $H^*\PP(E)$ over $H^*\BB$, since
\[
  e_i(t) = c^T_i(\CC^n) = c_i(E)  \qquad \text{and} \qquad \zeta = c^T_1(\Oo_{\PP^{n-1}}(1)) = c_1(\Oo_{\PP(E)}(1)),
\]
as in Example~\ref{ex:Cn}.
\end{proof} 
\end{example}

\subsection{Two notions} \label{ss:notions}

There are two general notions about equivariant cohomology to have in mind, especially when $G$ is a torus.


The first notion is that \emph{equivariant cohomology determines ordinary cohomology}.  From the fiber bundle picture, with the commutative diagram
\begin{diagram}
 X & \rInto & \EE\times^\Gg X \\
 \dTo &    &  \dTo  \\
\pt & \rInto & \BB,
\end{diagram}
$H_G^*X$ is an algebra over $H_G^*(\pt)$, and restricting to a fiber gives a canonical map $H_G^*X \to H^*X$, compatible with $H_G^*(\pt) \to H^*(\pt) = \Z$.  In nice situations, we will have:
\begin{quote}
 $H_G^*X \to H^*X$ is {\em surjective}, with kernel generated by the kernel of $H_G^*(\pt) \to \Z$.
\end{quote} 


The second notion is that \emph{equivariant cohomology is determined by information at the fixed locus}. 
By functoriality, the inclusion of the fixed locus $\iota\colon X^G \hookrightarrow X$ gives a restriction (or ``localization'') map $\iota^*\colon H_G^*X \to H_G^*X^G$.  In nice situations, we have:
\begin{quote}
$\iota^*\colon H_G^*X \to H_G^*X^G$ is {\em injective}.
\end{quote} 


\begin{example}
Both of these notions can fail, even when $G$ is a torus.  For example, take 
$\Gg = X = \CC^*$, where $\Gg$ acts on itself via left
multiplication.  One the one hand,
\[
  H^1_{\CC^*} (\CC^*) = H^1((\CC^\infty\setminus\{0\})\times^{\CC^*} \CC^*) = H^1(\CC^\infty\setminus\{0\}) = 0,
\]
but on the other hand, $H^1(\CC^*) =H^1(S^1) = \ZZ$, so the first notion fails.  
Since the action has no fixed points, the second cannot hold, either.  However, we will see that the ``nice'' situations, where both notions do hold, include many interesting cases.
\end{example}

When the second notion holds, it provides one of the most powerful techniques in equivariant theory.  To get an idea of this, suppose $X$ has \emph{finitely} many fixed points; one would never expect an injective restriction map $\iota^*$ in ordinary cohomology, by degree reasons!  Yet in many situations, all information about $H_T^*X$ is contained in the fixed locus.  This will be the topic of the next lecture.

\section{Lecture Two: Localization}\label{s:lec2}



From now on, we will consider only tori: $G=T \isom (\C^*)^n$.  Since it comes up often, it is convenient introduce notation for the equivariant cohomology of a point:
\[
  \Lambda = \Lambda_T = H_T^*(\pt) \isom \Z[t_1,\ldots,t_n].
\]

\subsection{Restriction maps}

If $X$ is a $T$-space and $p\in X$ is a fixed point, then the inclusion
$\iota_p \colon \{p\}\to X$ is equivariant, so it induces a map on equivariant 
cohomology
\[
  \iota_p^\ast \colon H^*_T X \to X^*_T(\{p\}) = \Lambda.
\]


\begin{example}
Let $E$ be an equivariant vector bundle of rank $r$ on $X$, with $E_p$ the fiber at $p$.  Then $\iota_p^*(c^T_i(E)) = c^T_i(E_p)$, as usual.  Now $E_p$ is just a representation of $T$, say with weights (characters) $\chi_1,\ldots,\chi_r$.  That is, $E_p \isom \CC^r$, and $t\cdot(v_1,\ldots,v_r) = (\chi_1(t)v_1,\ldots,\chi_r(t)v_r)$ for homomorphisms $\chi_i\colon T\to\C^*$.  So $c^T_i(E_p) = e_i(\chi_1,\ldots,\chi_r)$ is the $i$th elementary symmetric polynomial in the $\chi$'s.

In particular, the top Chern class goes to the product of the weights of $E_p$:
\[
  \iota_p^*(c_r(E)) = \chi_1\cdots\chi_r.
\]
\end{example}



\begin{example}\label{ex:Pn-res}
Consider $X = \PP^{n-1}$ with the standard action of $T=(\CC^*)^n$, so
\[
  (t_1, \ldots, t_n)\cdot [x_1,\ldots,x_n] = [t_1 x_1 , \ldots , t_n x_n].
\]
The fixed points of this action are the points
\[
  p_i = [\underbrace{0,\ldots,0,1}_i ,0,\ldots,0] \quad \text{for} \quad i = 1, \ldots, n.
\]
The fiber of the tautological line bundle $\Oo(-1)$ at $p_i$ is the coordinate line $\CC\cdot \epsilon_i$, so $T$ acts on $\Oo(-1)_{p_i}$ by the character $t_i$, and on $\Oo(1)_{p_i}$ by $-t_i$.  In the notation of Example~\ref{ex:lb-char}, $\Oo(-1)_{p_i}\isom L_{t_i}$ and $\Oo(1)_{p_i}\isom L_{-t_i}$.  Setting $\zeta = c^T_1(\Oo(1))$, we see that 
\[
  \iota_{p_i}^*\zeta = -t_i.
\]
\end{example}


\begin{exercise}
Show that the map of $\Lambda$-algebras
\begin{diagram}
  \Lambda[\zeta]/\prod_{i=1}^n(\zeta+t_i) & \rTo & \Lambda^{\oplus n} \\
    \zeta       & \rMapsto     & (-t_1,\ldots,-t_n)
\end{diagram}
is injective.  (By Examples~\ref{ex:Pn} and \ref{ex:Pn-res}, this is the restriction map $H_T^*\P^{n-1} \xrightarrow{\iota^*} H_T^*(\P^{n-1})^T = \bigoplus H_T^*(p_i)$.)
\end{exercise}

\subsection{Gysin maps}


For certain kinds of (proper) maps $f\colon Y \to X$, there are Gysin pushforwards
\[
  f_*\colon H_T^*Y \to H_T^{*+2d}X,
\]
as in ordinary cohomology.  Here $d=\dim X - \dim Y$.  We'll use two cases, always assuming $X$ and $Y$ are nonsingular varieties.

\begin{enumerate}
 \item {\it Closed embeddings.}  If $\iota\colon Y\hookrightarrow X$ is a $T$-invariant closed embedding of codimension $d$, we have $\iota_*\colon H_T^*Y \to H_T^{*+2d}X$.  This satisfies:
  \begin{enumerate}
   \item $\iota_*(1) = \iota_*[Y]^T = [Y]^T$ is the fundamental class of $Y$ in $H_T^{2d}X$.
   \item (self-intersection) $\iota^*\iota_*(\alpha) = c^T_d(\Nn_{Y/X})\cdot \alpha$, where $\Nn_{Y/X}$ is the normal bundle.
  \end{enumerate}
 \item {\it Integral.}  For a complete (compact) nonsingular variety $X$ of dimension $n$, the map $\rho\colon X\to\pt$ gives $\rho_*\colon H_T^*X \to H_T^{*-2n}(\pt)$.
\end{enumerate}

\begin{example}\label{ex:P1}
Let $T$ act on $\PP^1$ with weights $\chi_1$ and $\chi_2$, so $t\cdot [x_1,\,x_2] = [\chi_1(t) x_1
,\, \chi_2(t) x_2]$). Let $p_1=[1,\,0]$, $p_2=[0,\,1]$ as before.  Setting $\chi = \chi_2 - \chi_1$, 
the tangent space $T_{p_1} \PP^1$ has weight $\chi$:
\[
  t\cdot [1,\, a] = [\chi_1(t),\, \chi_2(t)a] = [1,\, \chi(t)a].
\]
Similarly, $T_{p_2} \PP^1$ has weight $-\chi$.  
So $\iota_{p_1}^*[p_1]^T = c_1^T(T_{p_1}\P^1) = \chi$, and $\iota_{p_2}[p_2]^T = -\chi$.  (And the other restrictions are zero, of course.)  From Example~\ref{ex:Pn-res}, we know $\iota_{p_1}^*\zeta = -\chi_1 = \chi-\chi_2$ and $\iota_{p_1}^*\zeta = -\chi_2 = -\chi-\chi_1$, so
\[
  [p_1]^T = \zeta+\chi_2 \qquad \text{ and } \qquad [p_2]^T = \zeta+\chi_1
\]
in $H_T^*\PP^1$.
\end{example}

\begin{exercise}
More generally, show that if $T$ acts on $\PP^{n-1}$ with weights $\chi_1, \ldots, \chi_n$, then
\[
  [p_i]^T = \prod_{j\neq i} (\zeta + \chi_j)
\]
in $H_T^*\PP^{n-1}$.
\end{exercise}

\begin{example}\label{ex:self-int}
If $p\in Y \subseteq X$, with $X$ nonsingular, and $p$ a nonsingular point on the (possibly singular) subvariety $Y$, then
\[
  \iota_p^*[Y]^T = c_d^T(N_p) = \prod_{i=1}^d \chi_i
\]
in $\Lambda=H_T^*(p)$, where the $\chi_i$ are the weights on the normal space $N_p$ to $Y$ at $p$.
\end{example}

\subsection{First localization theorem}


Assume that $X$ is a nonsingular variety, with finitely many fixed points.  Consider the sequence of maps
\[
  \bigoplus_{p\in X^T} \Lambda = H_T^*X^T \xrightarrow{\iota_*} H_T^*X \xrightarrow{\iota^*} H_T^*X^T = \bigoplus_{p\in X^T} \Lambda.
\]
The composite map $\iota^*\iota_*\colon\bigoplus\Lambda \to \bigoplus\Lambda$ is diagonal, and is multiplication by $c^T_n(T_pX)$ on the summand corresponding to $p$.

\begin{theorem}\label{t:one}
Let $S\subseteq \Lambda$ be a multiplicative set containing the element
\[
  c := \prod_{p\in X^T} c_n^T(T_p X).
\]
\begin{enumerate}[(a)]
\item \label{ta} The map 
\[
  S^{-1}\iota^* \colon S^{-1} H^\ast_T X \to S^{-1} H_T^\ast X^T \eqno (*)
\]
is surjective, and the cokernel of $\iota_*$ is annihilated by $c$.

\item \label{tb} Assume in addition that $H_T^\ast X$ is a free $\Lambda$-module of rank at most $\#X^T$.
Then the rank is equal to $\#X^T$, and the above map (*) is an isomorphism.
\end{enumerate}
\end{theorem}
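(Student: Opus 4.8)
The plan is to isolate the one genuinely geometric step and handle everything else formally. Write $e_p = c_n^T(T_pX)\in\Lambda$ for the equivariant Euler class of the tangent representation at a fixed point $p$. Since $X^T$ is finite, each $p$ is an isolated fixed point, so every weight of $T_pX$ is nonzero --- a zero weight would make the smooth subvariety $X^T$ positive-dimensional at $p$ --- and hence $e_p\neq 0$ in the integral domain $\Lambda$. As $c=\prod_{q\in X^T}e_q$, each $e_p$ divides $c$, so each $e_p$ becomes a unit in $S^{-1}\Lambda$. As recorded just before the theorem, $\iota^*\iota_*$ is diagonal with $p$-entry multiplication by $e_p$, so $S^{-1}(\iota^*\iota_*)$ is an isomorphism; formally this makes $S^{-1}\iota_*$ split injective and, since its composite with $S^{-1}\iota^*$ is surjective, makes $S^{-1}\iota^*$ surjective. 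That is the surjectivity assertion of part (a).

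For the claim that $c$ annihilates $\operatorname{coker}(\iota_*)$, I would invoke the long exact sequence in equivariant cohomology of the pair $(X,X\setminus X^T)$ --- concretely, of $(\EE_m\times^T X,\ \EE_m\times^T(X\setminus X^T))$ for $m\gg0$ via Lemma~\ref{l:approx}. Because $X^T$ is a smooth closed $T$-invariant subvariety of the smooth $X$, the equivariant Thom isomorphism for its (complex rank $n$) normal bundle identifies $H_T^*(X,X\setminus X^T)$ with $H_T^{*-2n}(X^T)=\bigoplus_{p\in X^T}\Lambda$, and under this identification the natural map $H_T^*(X,X\setminus X^T)\to H_T^*X$ is the Gysin pushforward $\iota_*$ (compatibly with $\iota_*(1)=[X^T]^T$). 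Exactness of
\[
 H_T^*(X,X\setminus X^T)\xrightarrow{\ \iota_*\ }H_T^*X\longrightarrow H_T^*(X\setminus X^T)
\]
then embeds $\operatorname{coker}(\iota_*)$ into $H_T^*(X\setminus X^T)$, and it remains to prove that $c\cdot H_T^*(X\setminus X^T)=0$.

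This last vanishing is the main obstacle and the only place the geometry of torus actions is really used: one must show $H_T^*Y$ is killed by $c$ for $Y$ a $T$-variety with $Y^T=\emptyset$. I would argue by Noetherian induction on $T$-stable closed subsets, using Sumihiro's theorem to cover $Y$ by $T$-stable affine charts and thereby stratify it into locally closed $T$-invariant pieces of constant stabilizer. For a stratum $Z$ with stabilizer the proper subtorus $T_Z\subsetneq T$, the $\Lambda$-module $H_T^*Z$ is in fact a module over the quotient ring $H^*_{T_Z}(\pt)$ of $\Lambda$ (restriction of characters), since $T$ acts on $Z$ through $T/T_Z$; inspecting a $T$-fixed point in the closure of an orbit meeting $Z$, one checks that some tangent weight at some fixed point of $X$ is trivial on $T_Z$, so that $c$ already vanishes in $H^*_{T_Z}(\pt)$ and hence annihilates $H_T^*Z$. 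The long exact sequences of the resulting filtration propagate this to $c\cdot H_T^*(X\setminus X^T)=0$. (When $X$ is smooth and complete this streamlines: choose a generic one-parameter subgroup $\lambda$ with $X^\lambda=X^T$; the Bialynicki--Birula strata of $X\setminus X^T$ are then affine bundles over subsets of $X^T$, and the induction is immediate.)

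Part (b) does not use this torsion input at all --- only the surjectivity just proved. If $H_T^*X$ is $\Lambda$-free of rank $r\le\#X^T$, its localization $S^{-1}H_T^*X$ is $S^{-1}\Lambda$-free of rank $r$ and, by part (a), surjects via $S^{-1}\iota^*$ onto $S^{-1}H_T^*X^T=(S^{-1}\Lambda)^{\#X^T}$. A module admitting a surjection onto a free module of rank $\#X^T$ has rank at least $\#X^T$ (the surjection splits), so $r=\#X^T$; then $S^{-1}\iota^*$ is a surjection of free modules of equal finite rank over the domain $S^{-1}\Lambda$, and such a map is an isomorphism (it splits, and additivity of rank forces the kernel --- a rank-zero submodule of a free module, hence torsion-free of rank zero --- to vanish). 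This proves $(*)$, which is part (b).
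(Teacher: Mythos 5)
Your treatment of the surjectivity in (a) and of part (b) matches the paper's proof in substance. The paper phrases the surjectivity step as ``the determinant $\det(\iota^*\iota_*)=c$ becomes invertible after localization''; you phrase it as each diagonal entry $e_p$ dividing $c$, hence becoming a unit. These are the same argument. For (b), the paper invokes noetherianity of $S^{-1}\Lambda$ to conclude that a surjection of finite free modules of equal rank is an isomorphism, whereas you split the surjection and use additivity of rank over the domain $S^{-1}\Lambda$; both are correct and elementary.

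The real divergence is in the second assertion of (a), that $\operatorname{coker}(\iota_*)$ is killed by $c$. The paper's one-line proof (``it suffices to show $S^{-1}(\iota^*\iota_*)$ is surjective'') delivers surjectivity of $S^{-1}\iota^*$ but does not by itself control $\operatorname{coker}(\iota_*)$: knowing the composite $\iota^*\iota_*$ has invertible determinant after localization says nothing direct about the middle term $H_T^*X$ modulo the image of $\iota_*$. You are right that this requires a genuinely geometric input, and your strategy --- the long exact sequence of the pair $(X, X\setminus X^T)$ together with the Thom isomorphism to identify the boundary map with $\iota_*$, reducing to the vanishing $c\cdot H_T^*(X\setminus X^T)=0$ --- is the standard route (Atiyah--Segal, Quillen, Hsiang), and it fills a genuine gap in the paper's terse argument. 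That said, your sketch of the final vanishing step has two soft spots worth flagging: (i) stabilizers of points in a stratum need not be subtori --- they can have finite components --- and with $\Z$ coefficients the finite part matters (e.g.\ $H_T^*(\C^*)\cong\Z[t]/(2t)$ for the weight-$2$ action is killed by $2t$ but not by $t$, and it is precisely the integer factor in the corresponding tangent weight that saves the day); and (ii) the step ``inspect a $T$-fixed point in the closure of an orbit meeting $Z$'' presumes such a fixed point exists, which is automatic when $X$ is complete but needs a separate argument otherwise (the theorem as stated assumes only nonsingularity and finiteness of $X^T$). You acknowledge the complete case, but the general statement requires more care, for instance passing to a $T$-equivariant completion or arguing via the generic stabilizer and a covering by $T$-stable affines as in Sumihiro, keeping careful track of integer multiplicities.
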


\begin{proof}For \eqref{ta}, it suffices to show that the composite map $S^{-1}(\iota^* \circ \iota_*) = S^{-1}\iota^*\circ S^{-1}\iota_*$ is surjective.  This in turn follows from the fact that the determinant
\[
  \det (\iota^*\circ \iota_*) = \prod_p c_n^T(T_p X) = c
\]
becomes invertible after localization.

For \eqref{tb}, surjectivity of $S^{-1}\iota^*$ implies $\rank H_T^*X \geq \#X^T$, and hence equality.  Finally, since $S^{-1}\Lambda$ is noetherian, a surjective map of finite free modules of the same rank is an isomorphism.
\end{proof}

\subsection{Equivariant formality}

The question arises of how to verify the hypotheses of Theorem~\ref{t:one}.  To this end, we consider the following condition on a $T$-variety $X$:
\begin{itemize}
\item[(EF)] $H^*_TX$ is a free $\Lambda$-module, and has a $\Lambda$-basis that restricts to a $\Z$-basis for $H^*X$.
\end{itemize}
Using the Leray-Hirsch theorem, this amounts to degeneration of the Leray-Serre spectral sequence of the fibration $\EE\times^T X \to \BB$.  A space satisfying the condition (EF) is often called \define{equivariantly formal}, a usage introduced in \cite{gkm}.

One common situation in which this condition holds is when $X$ is a nonsingular projective variety, with $X^T$ finite.  In this case, the Bia{\l}ynicki-Birula decomposition yields a collection of $T$-invariant subvarieties, one for each fixed point, whose classes form a $\Z$-basis for $H^*X$.  The corresponding equivariant classes form a $\Lambda$-basis for $H_T^*X$ restricting to the one for $H^*X$, so (EF) holds.  Moreover, since the basis is indexed by fixed points, $H_T^*X$ is a free $\Lambda$-module of the correct rank, and assertion \eqref{tb} of Theorem~\ref{t:one} also holds.

\begin{corollary}
The ``two notions'' from \S\ref{ss:notions} hold for a nonsingular projective $T$-variety with finitely many fixed points:
\[
  H_T^*X \twoheadrightarrow H^*X \qquad \text{ and } \qquad H_T^*X \hookrightarrow H_T^*X^T.
\]
\end{corollary}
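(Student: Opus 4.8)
The plan is to deduce both statements from equivariant formality, using the fact recalled just above: for a nonsingular projective $X$ with $X^T$ finite, the Bia{\l}ynicki--Birula (BB) decomposition produces a $\Z$-basis of $H^*X$ indexed by the fixed points whose equivariant lifts form a $\Lambda$-basis of $H_T^*X$. So I would first set up this input carefully: choose a generic one-parameter subgroup $\C^*\to T$ with the same fixed locus as $T$, form the attracting cells $X_p^+$ for $p\in X^T$, and note that the classes of their closures span $H^*X$ over $\Z$; then the corresponding classes $[\,\overline{X_p^+}\,]^T\in H_T^*X$, together with the Leray--Hirsch theorem (equivalently, degeneration of the Leray--Serre spectral sequence of $\EE\times^T X\to\BB$), show that $H_T^*X$ is $\Lambda$-free on these lifts and that the restriction $H_T^*X\to H^*X$ carries them to the chosen basis. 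This is condition (EF), now with the extra information that the basis is indexed by $X^T$.

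Granting this, the first notion is essentially immediate. Surjectivity of $H_T^*X\to H^*X$ is clear, since a $\Lambda$-basis maps onto a $\Z$-basis. For the kernel statement, I would observe that having a $\Lambda$-basis restricting to a $\Z$-basis yields an isomorphism of $\Lambda$-modules $H_T^*X\isom\Lambda\otimes_\Z H^*X$ compatible with restriction to a fiber; reducing modulo the augmentation ideal $(t_1,\ldots,t_n)=\ker(\Lambda\to\Z)$ then identifies $H^*X$ with the quotient of $H_T^*X$ by the ideal this generates, which is exactly the claim.

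For the second notion I would invoke Theorem~\ref{t:one}\eqref{tb}. Since the BB basis is indexed by $X^T$, the module $H_T^*X$ is free over $\Lambda$ of rank exactly $\#X^T$, so the hypothesis of part \eqref{tb} holds (take $S$ to be the multiplicative set generated by $c=\prod_{p\in X^T}c_n^T(T_pX)$), and hence $S^{-1}\iota^*\colon S^{-1}H_T^*X\to S^{-1}H_T^*X^T$ is an isomorphism. To get injectivity of $\iota^*$ before localizing, I would use that $H_T^*X$ is torsion-free over the domain $\Lambda$ (being free), so $H_T^*X\to S^{-1}H_T^*X$ is injective; composing with the isomorphism $S^{-1}\iota^*$ and using the commuting square then forces $\iota^*$ itself to be injective.

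The only step I expect to require real work is the BB input in the first paragraph — that the cell closures give a $\Z$-basis for $H^*X$ and that their equivariant counterparts restrict correctly — together with the compatibility ``restriction to a fiber equals reduction modulo $(t_1,\ldots,t_n)$'', which is where projectivity and the concrete approximation spaces $\EE_m$ actually enter. Everything downstream is formal manipulation of finite free modules over $\Lambda$ plus the already-proved Theorem~\ref{t:one}.
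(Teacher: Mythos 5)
Your proposal is correct and takes essentially the same route as the paper: the paper's proof is precisely the paragraph preceding the corollary, which uses the Bia{\l}ynicki--Birula decomposition to establish (EF) together with the rank hypothesis $\rank H_T^*X = \#X^T$, and then invokes Theorem~\ref{t:one}\eqref{tb}. You additionally spell out the torsion-freeness step needed to pass from the localized isomorphism $S^{-1}\iota^*$ to injectivity of $\iota^*$ itself, which the paper leaves implicit.
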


\begin{remark}
Condition (EF) is not strictly necessary to have an injective localization map $S^{-1}H_T^*X \to S^{-1}H_T^*X^T$.  In fact, if one takes $S=\Lambda\setminus\{0\}$, no hypotheses at all are needed on $X$: this map is always an isomorphism (though the rings may become zero); see \cite[\S IV.1]{hsiang}.

On the other hand, this phenomenon is peculiar to torus actions.  For example, the group $B$ of upper-triangular matrices acts on $\PP^{n-1}$ with only one fixed point.  However, since $B$ admits a deformation retract onto the diagonal torus $T$, the ring $H_B^*\PP^{n-1}=H_T^*\PP^{n-1}$ is a free module over $\Lambda_B=\Lambda_T$.  There can be no injective map to $H_B^*(\PP^{n-1})^B=H_B^*(\pt)=\Lambda$, even after localizing.  (The difference is that $H_B^*(X\setminus X^B)$ is not necessarily a torsion $\Lambda_B$-module when $B$ is not a torus.)
\end{remark}

\subsection{Integration formula (Atiyah-Bott-Berline-Vergne)}

\begin{theorem}\label{t:two}
Let $X$ be a compact nonsingular variety of dimension $n$, with finitely many $T$-fixed points.  Then
\[
  \rho_*\alpha = \sum_{p\in X^T} \frac{\iota_p^*\alpha}{c^T_n T_pX}
\]
for all $\alpha \in H_T^*X$.  
\end{theorem}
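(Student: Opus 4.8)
The plan is to reduce the integration formula to the First Localization Theorem (Theorem~\ref{t:one}) together with the self-intersection formula for closed embeddings. The key observation is that both sides of the claimed identity are $\Lambda$-linear in $\alpha$, and — after inverting the element $c = \prod_{p} c_n^T(T_pX)$ — the localization map $\iota^*\colon S^{-1}H_T^*X \to S^{-1}H_T^*X^T = \bigoplus_p S^{-1}\Lambda$ is an isomorphism (Theorem~\ref{t:one}\eqref{tb}, whose hypotheses hold since $X$ is compact nonsingular with finite fixed locus, hence equivariantly formal). So it suffices to verify the formula on a generating set, and the natural choice is the image under $\iota_*$ of the standard basis vectors of $\bigoplus_p \Lambda$; equivalently, it suffices to prove $\rho_*[p]^T = 1$ for each fixed point $p$, where $[p]^T = \iota_{p*}(1)$, and to check that the right-hand side evaluates to the same thing on each such class.

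First I would record the two ingredients. (1) Functoriality of Gysin pushforward: if $\iota_p\colon\{p\}\hookrightarrow X$ is the inclusion and $\rho\colon X\to\pt$, then $\rho\circ\iota_p$ is the identity on $\pt$, so $\rho_*\iota_{p*}(1) = 1$ in $\Lambda$. Hence $\rho_*[p]^T = 1$. (2) The restriction of $[p]^T$ to fixed points: by the self-intersection formula (item 1(b) in the Gysin section, i.e. Example~\ref{ex:self-int}), $\iota_q^*[p]^T = \iota_q^*\iota_{p*}(1)$ equals $c_n^T(T_pX)$ when $q=p$ and $0$ when $q\neq p$, since the composite $\iota^*\iota_*$ is diagonal with entry $c_n^T(T_pX)$ on the $p$-summand. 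Plugging $\alpha = [p]^T$ into the right-hand side of the asserted formula therefore gives $\sum_q \iota_q^*[p]^T / c_n^T(T_qX) = c_n^T(T_pX)/c_n^T(T_pX) = 1 = \rho_*[p]^T$, so the formula holds on each $[p]^T$.

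Finally I would assemble the argument. Since $X$ is equivariantly formal with $H_T^*X$ free over $\Lambda$ of rank $\#X^T$, the map $S^{-1}\iota_*$ is surjective (indeed an isomorphism) for $S$ containing $c$; so the classes $[p]^T$, $p\in X^T$, span $S^{-1}H_T^*X$ over $S^{-1}\Lambda$. Both $\alpha\mapsto \rho_*\alpha$ and $\alpha\mapsto \sum_p \iota_p^*\alpha/c_n^T(T_pX)$ are $S^{-1}\Lambda$-linear maps $S^{-1}H_T^*X \to S^{-1}\Lambda$ (the latter makes sense precisely because the $c_n^T(T_pX)$ are invertible in $S^{-1}\Lambda$), and by the previous paragraph they agree on a generating set, hence on all of $S^{-1}H_T^*X$. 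The identity then holds in $S^{-1}\Lambda$; but $\rho_*\alpha$ and each $\iota_p^*\alpha$ already live in $\Lambda$, and — crucially — the common denominator $c$ divides the difference-numerator in such a way that the identity in $S^{-1}\Lambda$ forces the identity in $\Lambda$, since $\Lambda$ is a domain and $\Lambda\hookrightarrow S^{-1}\Lambda$. The main obstacle is this last descent step: a priori the right-hand side is only defined after localization, so one must either argue that $\Lambda\to S^{-1}\Lambda$ is injective (clear, as $\Lambda$ is a polynomial ring and $c\neq 0$) and that the equality of the two $\Lambda$-valued quantities, once known in $S^{-1}\Lambda$, pulls back — or, alternatively, observe directly that the asserted sum is forced to lie in $\Lambda$ because it equals $\rho_*\alpha$. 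Either way the subtlety is purely about tracking where the classes live, not about any further geometry.
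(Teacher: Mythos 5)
Your proposal is correct and follows essentially the same route as the paper: reduce via the first localization theorem to classes pushed forward from fixed points, then evaluate both sides using $\rho\circ\iota_p=\mathrm{id}_\pt$ and the self-intersection formula. The only cosmetic differences are that the paper checks the formula on general $\alpha=(\iota_p)_*\beta$ while you specialize to $\beta=1$ and invoke $\Lambda$-linearity (equivalent), and that you invoke Theorem~\ref{t:one}\eqref{tb} where only the surjectivity statement in \eqref{ta} is needed; also your final worry about ``descending'' from $S^{-1}\Lambda$ to $\Lambda$ is superfluous, since the equality is simply an identity in $S^{-1}\Lambda$ whose left side happens to lie in $\Lambda$.
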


\begin{proof}
Since $\iota_*\colon S^{-1}H_T^*X^T \to H_T^*X$ is surjective, it is enough to assume $\alpha = (\iota_p)_*\beta$, for some $\beta \in H_T^*(p) = \Lambda$.  Then the LHS of the displayed equation is $\rho_*\alpha = \rho_*(\iota_p)_*\beta = \beta$.  (The composition $H_T^*(p) \xrightarrow{(\iota_p)_*} H_T^*X \xrightarrow{\rho_*} \Lambda$ is an isomorphism.)  The RHS is
\[
  \sum_{q \in X^T} \frac{\iota_q^*(\iota_p)_*\beta}{c^T_n T_qX} = \frac{\iota_p^*(\iota_p)_*\beta}{c^T_n T_pX} = \beta,
\]
using the self-intersection formula for the last equality.
\end{proof}

\begin{example}
Take $X=\P^{n-1}$, with the standard action of $T$ via character $t_1,\ldots, t_n$, and let $\zeta = c^T_1(\O(1))$.  Then one computes
\begin{align*}
  \rho_*(\zeta^k) &= \left\{\begin{array}{cl} 
  0 & \text{ if } k<n-1, \text{ by degree: } H_T^{2k-2(n-1)}(\pt) = 0; \\
  1 & \text{ if } k=n-1, \text{ by ordinary cohomology}. \end{array}\right. \\
\intertext{
On the other hand, using the localization formula, we obtain
}
  \rho_*(\zeta^k) &= \sum_{i=1}^n \frac{(-t_i)^k}{\prod_{j\neq i} (t_j-t_i)},
\end{align*}
yielding a nontrivial algebraic identity!
\end{example}

\begin{remark}
Ellingsrud and Str{\o}mme \cite{ell-str} used this technique, with the aid of computers, to find the answers to many difficult enumerative problems, e.g., the number of twisted cubics on a Calabi-Yau three-fold.

As an illustrative exercise, one could compute the number of lines passing through to four given lines in $\P^3$.  (Use localization for the action of $T\isom(\C^*)^4$ on the Grassmannian $\Gr(2,4)$.)  A more challenging problem is to compute number of conics tangent to five given conics in $\P^2$, via localization for the action of $T\isom(\C^*)^2$ on the space of complete conics; see, e.g., \cite[p.~15]{brion-eit}.
\end{remark}

\subsection{Second localization theorem}\label{ss:gkm}

A remarkable feature of equivariant cohomology is that one can often characterize the image of the restriction map $\iota^*\colon H_T^*X \to H_T^*X^T$, realizing $H_T^*X$ as a subring of a direct sum of polynomial rings.  To state a basic version of the theorem, we use the following hypothesis.  For characters $\chi$ and $\chi'$ appearing as weights of $T_p X$, for $p\in X^T$, assume:
\begin{enumerate}
\renewcommand{\theenumi}{(*)}
\item  if $\chi$ and $\chi'$ occur in the \emph{same} $T_pX$ they are relatively prime in $\Lambda$. \label{gkm:hypo}
\end{enumerate}
\renewcommand{\theenumi}{\arabic{enumi}}
Condition \ref{gkm:hypo} implies that for each $p\in X^T$ and each $\chi$ occurring as a weight for $T_pX$, there exists a unique $T$-invariant curve $E=E_{\chi,p}\isom\P^1$ through $p$, with $T_pE \isom L_\chi$.  By Example~\ref{ex:P1}, $E^T = \{p,q\}$, and $T_qE$ has character $-\chi$.

\begin{theorem}\label{t:gkm}
Let $X$ be a nonsingular variety with $X^T$ finite, and assume \ref{gkm:hypo} holds.  Then an element
\[
  (u_p) \in \bigoplus_{p\in X^T} \Lambda = H_T^*X^T
\]
lies in $\iota^*H_T^*X$ if and only if, for all $E = E_{\chi,p} = E_{-\chi,q}$, the difference $u_p-u_q$ is divisible by $\chi$.
\end{theorem}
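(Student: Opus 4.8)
The plan is to prove the two directions separately, the forward direction being essentially formal and the reverse being the substantial one. For the forward direction (necessity): given $\alpha \in H_T^*X$ and an invariant curve $E = E_{\chi,p} = E_{-\chi,q}$, I would restrict $\alpha$ along the inclusion $E \hookrightarrow X$. Since $E \isom \P^1$ with the two fixed points $p,q$ carrying opposite weights $\pm\chi$ on the tangent directions, the Exercise following Example~\ref{ex:P1} describes $H_T^*E = H_T^*\P^1$ explicitly, and more importantly its image in $H_T^*E^T = \Lambda \oplus \Lambda$. A direct check (this is really the $n=2$ case of the first localization theorem, or the explicit presentation $\Lambda[\zeta]/(\zeta+\chi_1)(\zeta+\chi_2)$) shows that a pair $(a,b) \in \Lambda\oplus\Lambda$ lies in the image of $H_T^*\P^1$ exactly when $a - b$ is divisible by $\chi$. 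Since $\iota_p^*\alpha = u_p$ and $\iota_q^*\alpha = u_q$ factor through $H_T^*E$, the divisibility $\chi \mid u_p - u_q$ follows. So necessity is immediate from the rank-two case plus functoriality.

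For the reverse direction (sufficiency), let $M \subseteq \bigoplus_{p} \Lambda$ be the subgroup cut out by all the GKM divisibility conditions, and let $A = \iota^*H_T^*X \subseteq M$. By Theorem~\ref{t:one}\eqref{ta}, the cokernel of $\iota^*$ (after inverting the product $c$ of the tangent-weight products) vanishes, so $A$ and $M$ have the same rank over $\Lambda$, and in fact $c\cdot M \subseteq A$. The task is to upgrade this to $A = M$ integrally, i.e.\ without inverting anything. The standard approach is a filtration/induction argument on $X^T$: order the fixed points $p_1, \dots, p_N$ compatibly with a Bia{\l}ynicki-Birula (or Morse-theoretic) decomposition, so that the classes $[X_i]^T$ of the invariant strata form a $\Lambda$-basis of $H_T^*X$ with the triangularity property that $\iota_{p_j}^*[X_i]^T = 0$ for $j < i$ and $\iota_{p_i}^*[X_i]^T = \prod (\text{weights of } N_{p_i}) =: e_i$, a product of tangent weights at $p_i$. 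One then shows by downward induction on $i$ that any $(u_p) \in M$ can be written as a $\Lambda$-combination of the $[X_i]^T$: having matched the components at $p_{i+1}, \dots, p_N$, the remaining class $u$ has $u_{p_j} = 0$ for $j > i$, and one needs the coefficient $u_{p_i}/e_i$ to actually lie in $\Lambda$ — this is exactly where the GKM conditions are used, by factoring $e_i$ into its distinct prime factors $\chi$ (using hypothesis~\ref{gkm:hypo} that the weights at a point are pairwise coprime) and checking $\chi$-divisibility of $u_{p_i}$ one prime at a time, each time propagating along the invariant curve $E_{\chi,p_i}$ to a fixed point $q$ with $u_q$ already known to be divisible by $\chi$ (or zero).

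The main obstacle is this last divisibility step: showing that the GKM relations force $e_i \mid u_{p_i}$. The subtlety is that $e_i$ is a product of several weights $\chi$, and knowing $\chi \mid u_{p_i} - u_q$ for each invariant curve through $p_i$ only gives divisibility by each $\chi$ individually; one must invoke coprimality (hypothesis~\ref{gkm:hypo}) to conclude divisibility by the product, and one must be careful that the "already known" congruences at the neighboring fixed points $q$ are genuinely available at this stage of the induction — which is why the BB-ordering (so that curves emanating "downward" from $p_i$ land at fixed points $p_j$ with $j > i$, where we already know $u_{p_j}$, possibly $0$) is essential. I would set up the ordering carefully, record the triangularity of the restriction matrix $(\iota_{p_j}^*[X_i]^T)$ as a lemma, and then the induction is bookkeeping. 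A cleaner alternative, if one prefers to avoid Bia{\l}ynicki-Birula, is to argue by induction on the number of fixed points using the long exact sequence of the pair $(X, X \setminus \{p_{\min}\})$ with a filtration by invariant open sets, but the BB route is the most transparent given what has already been developed in the text.
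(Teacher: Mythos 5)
Your approach is genuinely different from the paper's. The paper follows the original Chang--Skjelbred route: for each character $\chi$ one forms $X^\chi$ (a union of invariant curves plus fixed points, which is the fixed locus of the codimension-one subtorus $\ker\chi$), and the image of $\iota^*$ is characterized as the intersection, over all such $\chi$, of the images of the maps $H_T^*X^\chi \to H_T^*X^T$; the Gysin push--pull and the self-intersection formula along the chain $H_T^*X^T \to H_T^*X^\chi \to H_T^*X \to H_T^*X^\chi \to H_T^*X^T$ reduce everything to the rank-two case on $\P^1$. Your proposal instead runs the Bia{\l}ynicki-Birula filtration argument: the BB cell classes give a triangular $\Lambda$-basis of $H_T^*X$, and one proves surjectivity onto the GKM module $M$ by solving the triangular system one fixed point at a time, with the coprimality hypothesis~\ref{gkm:hypo} supplying the needed divisibility at each step. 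Both are standard in the literature. The Chang--Skjelbred argument is more intrinsic (it needs no cell decomposition and works under weaker hypotheses such as mere equivariant formality), while the BB argument is more concrete and makes the module $M$ look like an explicit free resolution by cell classes; for the Grassmannian applications in Lecture Three the BB route is certainly the more natural one. Your forward direction (restricting to each invariant $\P^1$ and quoting the presentation $\Lambda[\zeta]/(\zeta+\chi_1)(\zeta+\chi_2)$) is exactly right and is also how the rank-two reduction is used in the Chang--Skjelbred argument.

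One genuine wrinkle in the sufficiency direction: as written, your triangularity convention and your induction direction do not match. You state $\iota_{p_j}^*[X_i]^T = 0$ for $j<i$ (a lower-triangular restriction matrix), which forces a \emph{forward} substitution: one determines $a_1,\dots,a_{i-1}$ first, and then at stage $i$ the remaining element $u'$ vanishes at $p_j$ for $j<i$. With that convention, the normal weights $\chi$ at $p_i$ (the repelling BB directions) are precisely the ones whose invariant curves flow to fixed points $q$ with $p_i \in \overline{X_q}$, i.e.\ to indices $j'<i$ — exactly the already-processed ones. Your writeup instead says ``downward induction'' with $u_{p_j}=0$ for $j>i$ and the curves landing at indices $j>i$; to make that consistent you would need the opposite triangularity $\iota_{p_j}^*[X_i]^T = 0$ for $j>i$ (equivalently, index the dense cell as $p_N$ rather than $p_1$). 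Either convention works — this is a bookkeeping point, and you flag the need to ``set up the ordering carefully'' — but as stated the two halves of the setup contradict each other, so it is worth fixing before the argument can be called complete. It is also worth recording explicitly that the proof uses (EF), i.e.\ that the BB classes form a $\Lambda$-basis, which in this generality requires $X$ to be complete; the theorem as stated in the text suppresses this hypothesis but the surrounding discussion makes clear it is intended.
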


As with the other localization theorems, the idea of the proof is to use the Gysin map and self-intersection formula, this time applied to the compositions
\[
 H_T^*X^T \to H_T^*X^\chi \to H_T^*X \to H_TX^\chi \to H_T^*X^T,
\]
where $X^\chi$ is a union of invariant curves $E_\chi$ for a fixed character $\chi$.  For a detailed proof, see \cite[\S5]{eq}.

The theorem is originally due to Chang and Skjelbred \cite{chang-skjelbred}.  It was more recently popularized in an algebraic geometry context by Goresky-Kottwitz-MacPherson \cite{gkm}.  The utility of this characterization is that it makes $H_T^*X$ a combinatorial object: the ring can be computed from the data of a graph whose vertices are the fixed points $X^T$ and whose edges are the invariant curves $E=E_{\chi,p}=E_{-\chi,q}$.

\section{Lecture Three: Grassmannians and Schubert calculus}\label{s:lec3}

\subsection{Pre-history: Degeneracy loci}

Let $X$ be a Cohen-Macaulay variety, and let $E$ be a rank $n$ vector bundle on $X$,
admitting a filtration 
\[ 0=E_0\subset E_1\subset\ldots\subset E_n = E, \quad \text{where} \quad \rank E_i = i. \]
Let $F$ be a rank $r$ vector bundle, and let $\phi\colon E\to F$ be a surjective morphism.  Given
a partition $\lambda = (r\geq \lambda_1 \geq \lambda_2\geq\ldots\geq \lambda_{n-r} \geq 0)$, the associated \emph{degeneracy locus} is defined as
\[
  D_\lambda(\phi) = \{ x\in X \,|\, \rank( E_{r-\lambda_i + i}(x)\xrightarrow{\phi(x)} F(x) ) \leq r-\lambda_i \text{ for } 1\leq i\leq n-r\} \subseteq X.
\]
Since these schemes appear frequently in algebraic geometry, it is very useful to have a formula for their cohomology classes.  Such a formula was given by Kempf and Laksov \cite{KempfLaksov1974}, and independently by Lascoux \cite{lascoux}:

\begin{theorem}[Kempf-Laksov]
Set $k=n-r$.  When $\codim D_\lambda = |\lambda| := \sum \lambda_i$, we have
\begin{align*}
  [D_\lambda] = \Delta_\lambda(c(F-E)) &:= \det( c_{\lambda_i + j-i}(i) ) \\ &\;= 
\left|
\begin{array}{cccc}
{c_{\lambda_1}(1) } & c_{\lambda_1+1}(1) & \cdots & c_{\lambda_1 + k-1}(1)\\
c_{\lambda_2 - 1}(2) & c_{\lambda_2}(2)   &   & \vdots \\
\vdots &   & \ddots & \vdots \\
c_{\lambda_{k} - k+1}(k) & \cdots & \cdots & c_{\lambda_{k}}(k)\\
\end{array} \right|
\end{align*}
in $H^*X$, where $c_p(i) = c_p(F - E_{r-\lambda_i + i})$.
\end{theorem}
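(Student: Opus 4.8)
\begin{proofof}{Proof proposal}
The plan is to follow Kempf and Laksov's geometric argument: resolve $D_\lambda(\phi)$ by an iterated projective bundle over $X$, identify the resolution as the zero locus of a section of an explicit vector bundle, and push the resulting top Chern class down to $X$, with the determinant emerging from the Gysin formula for projective bundles. Since $\phi$ is surjective, $K=\ker\phi$ is a subbundle of $E$ of rank $k=n-r$; writing $a_i=r-\lambda_i+i$ (so $a_1<a_2<\cdots<a_k\le n$), the defining rank conditions rewrite as the Schubert-type conditions $\dim(E_{a_i}(x)\cap K(x))\ge i$ for $1\le i\le k$.

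First I would build, over $X$, the flag bundle $\Fl$ parametrizing nested subbundles $D_1\subset D_2\subset\cdots\subset D_k$ of $E$ with $\rank D_j=j$ and $D_j\subseteq E_{a_j}$; since $E_{a_1}\subseteq\cdots\subseteq E_{a_k}$, this is an iterated projective bundle, the $j$-th step being $\PP(E_{a_j}/D_{j-1})$ of relative dimension $a_j-j=r-\lambda_j$, so $\Fl$ is smooth of dimension $\dim X+\sum_j(r-\lambda_j)=\dim X+kr-|\lambda|$. On $\Fl$ the map $\phi$ restricts on each tautological line $D_j/D_{j-1}$ to a section of the rank-$r$ bundle $\Hom(D_j/D_{j-1},F)$, and I let $Z\subseteq\Fl$ be the common zero locus of these $k$ sections. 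One then checks: (i) the projection $\pi\colon\Fl\to X$ carries $Z$ onto $D_\lambda(\phi)$ and is birational onto it, because over the dense open locus of $D_\lambda(\phi)$ where the intersection ranks are exactly $i$, the flag $D_j=E_{a_j}(x)\cap K(x)$ is the unique valid one; (ii) under the hypothesis $\codim D_\lambda(\phi)=|\lambda|$, the locus $Z$ has the expected codimension $kr=\sum_j r$ in the smooth variety $\Fl$, so the $k$ sections assemble into a regular section of $\bigoplus_j \Hom(D_j/D_{j-1},F)$ and
\[
 [Z]=\prod_{j=1}^{k} c_r\bigl(F\otimes (D_j/D_{j-1})^\dual\bigr)\quad\text{in }H^*\Fl .
\]
Hence $[D_\lambda(\phi)]=\pi_*[Z]$, and it remains to compute this pushforward.

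The pushforward is carried out one projective-bundle step at a time: at the $j$-th stage one integrates over the fibers of $\PP(E_{a_j}/D_{j-1})$, which by the standard projective-bundle Gysin formula converts powers of $c_1(\Oo_j(1))=c_1\bigl((D_j/D_{j-1})^\dual\bigr)$ into Segre classes of $E_{a_j}/D_{j-1}$; combining these with the factor $c(F)$ coming from the imposed conditions, and iterating through $j=k,k-1,\dots,1$ while re-expressing each $D_j/D_{j-1}$ in terms of the remaining data, reorganizes the total into the $k\times k$ determinant $\det\bigl(c_{\lambda_i+j-i}(F-E_{r-\lambda_i+i})\bigr)$. This final reorganization is a purely formal determinantal identity of Jacobi--Trudi type, the same manipulation that produces Giambelli's formula from iterated $\PP$-bundle pushforwards. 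The step I expect to be the real obstacle is not this determinant bookkeeping, which is routine, but part (ii): showing that the codimension hypothesis forces $Z$ to have expected codimension and that $\pi|_Z$ is honestly birational, so that the cycle-level identity $\pi_*[Z]=[D_\lambda(\phi)]$ holds with no excess contribution — this is where $X$ Cohen--Macaulay enters (so $D_\lambda(\phi)$ has no embedded components), and where in general one must work with refined/localized top Chern classes in the sense of Fulton rather than naive zero loci. An alternative that sidesteps the transversality analysis is to first establish the formula in the universal situation, over the Grassmann bundle $\Gr(k,\Ee)$ lying over the infinite flag manifold, where the analogous degeneracy locus is a genuine Schubert-type subvariety of known codimension and the resolution is automatically transverse; the data $(E_\bullet,\phi,F)$ induces a classifying map from $X$ to $\Gr(k,\Ee)$, and the codimension hypothesis guarantees that pulling back the universal class returns $[D_\lambda(\phi)]$.
\end{proofof}
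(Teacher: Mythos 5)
Your proposal reproduces the classical Kempf--Laksov argument: build the iterated projective-bundle resolution $\Fl\to X$ of flags $D_1\subset\cdots\subset D_k$ with $D_j\subseteq E_{r-\lambda_j+j}$, realize the resolution of $D_\lambda(\phi)$ as the zero locus of the induced sections of $\Hom(D_j/D_{j-1},F)$, and push the resulting top Chern class down to $X$ via iterated projective-bundle Gysin maps, with the determinant emerging from a Jacobi--Trudi-type reorganization. That is a valid route (and is essentially Kempf--Laksov's and Fulton's), but it is not the route the paper takes. The paper carries out only your ``alternative'' step --- reducing to the universal setting over the Grassmann bundle $\GGr(k,\Ee)$ over the infinite flag manifold $\BB B$, equivalently to a class in $H_T^*\Gr(k,\CC^n)$ --- and then, instead of resolving the universal degeneracy locus geometrically, identifies the Schubert class $[\Omega_\lambda]^T$ by \emph{equivariant localization}: the restrictions to fixed points (Proposition~\ref{p:kt}) uniquely characterize $[\Omega_\lambda]^T$, the double Schur function $s_\lambda(x|u)$ solves the same interpolation problem (Proposition~\ref{p:ms}, Molev--Sagan / Okounkov--Olshanski), and the determinantal expression is then the known algebraic identity for $s_\lambda(x|u)$ (cf.\ Macdonald). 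What each approach buys: yours is self-contained intersection theory and produces the determinant directly, at the cost of the transversality/birationality analysis you flag (correctly) as the delicate point and a somewhat involved pushforward computation; the paper's approach replaces that computation with fixed-point restrictions, which are elementary weight calculations, and uses the Cohen--Macaulay/codimension hypothesis only once in the pullback from the universal case, but it outsources the determinantal identity to the combinatorics of factorial Schur functions and requires the equivariant machinery (free $\Lambda$-module structure, interpolation uniqueness) developed earlier in the paper. Both are sound; the paper's choice is deliberate, as its point is precisely that the Kempf--Laksov formula \emph{is} an equivariant Giambelli formula, best seen through localization.
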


\noindent
Here the notation $c(A-B)=c(A)/c(B)$ means the formal series expansion of $(1+c_1(A)+c_2(A)+\cdots)/(1+c_1(B)+c_2(B)+\cdots)$, and $c_p$ is the degree $p$ term.

The proof of this theorem starts with a reduction to the Grassmannian bundle $\pi\colon\GGr(k, E)\to X$.  Since $\phi$ is surjective, the subbundle $K=\ker(\phi)\subseteq E$ has rank $k$, and it defines a section $\sigma\colon X\to \GGr(k,E)$ such that $\sigma^*S=K$.  (Here $S\subseteq\pi^*E$ is the tautological rank $k$ bundle on $\GGr(k,E)$.)

In fact, the theorem is equivalent to a formula in equivariant cohomology.  The universal base for rank $n$ bundles with filtration is $\BB{B}$; see Example~\ref{ex:B}.  Consider the following diagram:
\begin{diagram}
  \GGr(k,E)   & \rTo   &  \GGr(k,\Ee) \\
   \dTo^\pi   & \ruDashto^g  &    \dTo      \\
    X         & \rTo^f &   \BB B.
\end{diagram}
Writing $\Ee_1 \subset \Ee_2 \subset \cdots \subset \Ee_n=\Ee$ for the tautological sequence on $\BB B=\Fl(1,2,\ldots,n;\CC^\infty)$, the map $f$ is defined by $E_i=f^*\Ee_i$ for $1\leq i\leq n$.  The map $g$ is defined by $F=g^*\Ff$, where $\Ee \to \Ff$ is the universal quotient on $\GGr(k,\Ee)$.  Now a formula for $D_\lambda$ in $H^*X$ can be pulled back from a universal formula for a corresponding locus $\OOmega_\lambda$ in $H^*\GGr(k,\Ee)=H_B^*\Gr(k,\CC^n)=H_T^*\Gr(k,\CC^n)$.  We will see how such a formula can be deduced combinatorially, using equivariant localization.

\begin{remark}
Some extra care must be taken to ensure that the bundles $E_i$ and $F$ are pulled back from the algebraic approximations $\BB_m$; see, e.g., \cite[p.~486]{gr-diag}.
\end{remark}

\subsection{The basic structure of Grassmannians}

The Grassmannian $X=\Gr(k, \CC^n)$ is the space of $k$-dimensional linear subspaces in $\CC^n$
and can be identified with the quotient
\[
  M_{n\times k}^\circ / GL_k
\]
of the set of full-rank $n$ by $k$ matrices by the action of $GL_k$ by right multiplication.  
The groups $T \isom (\CC^\ast)^n\subset B \subset GL_n$ act on $X$ by
left multiplication.

For any $k$-element subset $I\subset\{1, \ldots, n\}$, we denote by $U_I$ the set of all
$k$-dimensional linear subspaces of $\CC^n$ whose projection on the subspace spanned by
the vectors $\{e_i \,|\, i\in I\}$, where $\{e_1, \ldots, e_n\}$ is the standard basis for $\CC^n$.  
It follows that $U_I \isom \CC^{k(n-k)}$ and $U_I$ is open in $X$.  Indeed, $U_I$
can be identified with the set of $k\times (n-k)$ matrices $M$ whose square submatrix on rows $I$ is
equal to the $k\times k$ identity matrix.  For $I = \{2, 4, 5\}$, we have
\[ 
U_I = \left\{ \left[
\begin{array}{ccc}
* & * & * \\
1 & 0 & 0 \\
* & * & * \\
0 & 1 & 0 \\
0 & 0 & 1 \\
* & * & * \\
\end{array} \right] \right\}. 
\]
In particular, $\dim X = k(n-k)$.

The topology of $Gr(k, \CC^n)$ is easily studied by means of the decomposition into \emph{Schubert
cells}: 
Each point $p\in X$ has an ``echelon'' form, that is, it can be represented by a full rank $n\times k$ matrix such as
\[ 
\left[
\begin{array}{ccc}
* & * & * \\
1 & 0 & 0 \\
* & * & 0 \\
0 & 1 & 0 \\
0 & 0 & 1 \\
0 & 0 & 0 \\
\end{array} \right].
\]
We denote the set of rows with $1$'s by $I$ (so in the example above, $I = \{2, 4, 5\}$)
and call it the \emph{pivot} of the corresponding subspace.  For any $k$-element subset $I\subset \{1, \ldots, n\}$, the set $\Omega^\circ_I$ of points with pivot $I$ forms a cell, isomorphic to an affine space of dimension equal to the number of stars in the matrix.  These are called \emph{Schubert cells}, and they give a cell decomposition of $X$.

Summarizing, we have
\[
  \Omega_I^\circ\stackrel{\text{closed}}{\hookrightarrow} U_I \stackrel{\text{open}}{\hookrightarrow} X.
\]
Note that $\Omega^\circ_I$ and $U_I$ are $T$-stable.

\subsection{Fixed points and weights}

Let $p_I \in U_I$ be the origin, that is, the point corresponding
to the subspace spanned by $\{e_i \,|\, i\in I\}$.  Working with matrix representatives, the following basic facts are easy to prove:
sees that
\begin{itemize}
\item The $T$-fixed points in $X$ are precisely the points $p_I$.  In particular, $\#X^T = \binom{n}{k}$.
\item The weights of $T$ on $T_{p_I} X = T_{p_I} U_I \isom U_I$ are $\{t_j - t_i \,|\, i\in I, j\notin I\}$.
\item The weights of $T$ on $T_{p_I} \Omega_I^\circ $ are $\{t_j - t_i \,|\, i\in I, j\notin I, i > j\}$.
\item The weights of $T$ on $N_{\Omega_I/X, p_I}$ are $\{t_j - t_i \,|\, i\in I, j\notin I, i < j\}$.
\end{itemize}

\begin{example}
With $I=\{2,4,5\}$, one sees that $t_4-t_3$ is a weight on $T_{p_I}\Omega^\circ_I$:
\[
z\cdot 
\left[
\begin{array}{ccc}
0 & 0 & 0 \\
1 & 0 & 0 \\
0 & a & 0 \\
0 & 1 & 0 \\
0 & 0 & 1 \\
0 & 0 & 0 \\
\end{array} \right]
=
\left[
\begin{array}{ccc}
0 & 0 & 0 \\
z_2 & 0 & 0 \\
0 & z_3 a & 0 \\
0 & z_4 & 0 \\
0 & 0 & z_5 \\
0 & 0 & 0 \\
\end{array} \right] 
\equiv
\left[
\begin{array}{ccc}
0 & 0 & 0 \\
1 & 0 & 0 \\
0 & \frac{z_3}{z_4} a & 0 \\
0 & 1 & 0 \\
0 & 0 & 1 \\
0 & 0 & 0 \\
\end{array} \right] .
\]
The other weights can be determined in a similar manner.
\end{example}

\subsection{Schubert classes in $H_T^*X$}

The closure $\Omega_I:= \overline{\Omega^\circ_I}$ of a Schubert cell is called a \emph{Schubert variety}.
It is a disjoint
union of all Schubert cells $\Omega_J^\circ$ for $J\leq I$ with respect to the \emph{Bruhat order}:
\[
  J\leq I \text{ iff } j_1\leq i_1,\, j_2\leq i_2,\, \ldots,\, j_k \leq i_k .
\]

Since the Schubert cells have even (real) dimension, it follows that the classes of their closures 
form bases for $H^*X$ and $H^*_TX$:
\[
  H^*X = \bigoplus_I \ZZ\cdot [\Omega_I] \quad\text{ and }\quad H^*_T X  = \bigoplus_I \Lambda \cdot [\Omega_I]^T .
\]
In particular, $H_T^*X$ is free over $\Lambda$, of the correct rank, so the localization theorems apply.

Let us record two key properties of Schubert classes.  From the description of weights, and from Example~\ref{ex:self-int}, it follows that
\begin{equation}
  \iota^*_{p_I} [\Omega_I]^T = \prod_{i\in I \, j\notin I\, i < j} (t_j - t_i). \label{eq:one}
\end{equation}
Moreover, since $p_J\in \Omega_I$ if and only if $J\leq I$ we see
that
\begin{equation}
  \iota^*_{p_J} [\Omega_I]^T = 0 \quad\text{ if } J\not\leq I. \label{eq:two}
\end{equation}
It turns out that the Schubert classes are unique solutions to a corresponding interpolation problem:

\begin{proposition}[Knutson-Tao \cite{KnutsonTao2003}, Feh\'er-Rim\'anyi \cite{FeherRimanyi2003}]\label{p:kt}
Relations \eqref{eq:one} and \eqref{eq:two} determine $[\Omega_I]^T$ as a class in $H_T^* X$. 
\end{proposition}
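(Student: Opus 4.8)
The plan is to treat the statement as an interpolation/uniqueness assertion: \emph{existence} of a solution is free, since $[\Omega_I]^T$ itself satisfies \eqref{eq:one} and \eqref{eq:two}, so everything reduces to showing that \eqref{eq:one} and \eqref{eq:two} have \emph{only} this solution among homogeneous classes in $H_T^*X$. (Comparing degrees of the two sides of \eqref{eq:one}, any homogeneous solution automatically lives in $H_T^{2\codim\Omega_I}X$.) The engine of the proof is the triangularity of the restriction map in the Schubert basis --- which is really just the injectivity of $\iota^*\colon H_T^*X\hookrightarrow H_T^*X^T$ from the localization theorems made explicit --- together with one dimension count.

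So suppose $\beta\in H_T^*X$ is homogeneous and satisfies \eqref{eq:one} and \eqref{eq:two} in place of $[\Omega_I]^T$. Because the Schubert classes form a $\Lambda$-basis of $H_T^*X$, I write $\beta=\sum_K c_K[\Omega_K]^T$ with homogeneous $c_K\in\Lambda$, and determine all $c_K$ by downward induction along the Bruhat order (fix a total order on the index sets refining $\leq$ and process from the top). When $p_J$ is reached, \eqref{eq:two} applied to the Schubert classes gives $\iota_{p_J}^*\beta=\sum_{K\geq J}c_K\,\iota_{p_J}^*[\Omega_K]^T$, and by the inductive hypothesis the only term with $K$ strictly above $J$ that can survive is the one with $K=I$ (already evaluated), contributing $\iota_{p_J}^*[\Omega_I]^T$. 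If $J\not\leq I$ this last term also vanishes by \eqref{eq:two}, so $c_J\,\iota_{p_J}^*[\Omega_J]^T=\iota_{p_J}^*\beta=0$; since $\iota_{p_J}^*[\Omega_J]^T=\prod_{i\in J,\,j\notin J,\,i<j}(t_j-t_i)$ is nonzero by \eqref{eq:one} and $\Lambda$ is a domain, $c_J=0$. If $J=I$, the same computation gives $c_I\,\iota_{p_I}^*[\Omega_I]^T=\iota_{p_I}^*\beta=\prod_{i\in I,\,j\notin I,\,i<j}(t_j-t_i)$, hence $c_I=1$.

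The one remaining case, $J<I$ in the Bruhat order --- exactly the fixed points where no condition is imposed, and hence the main obstacle --- is handled by degree. Here $\Omega_J$ is a proper closed subvariety of $\Omega_I$, so $\dim\Omega_J<\dim\Omega_I$ and therefore $\deg[\Omega_J]^T=2\codim\Omega_J>2\codim\Omega_I=\deg\beta$; a homogeneous element of degree $\deg\beta$ simply cannot involve $[\Omega_J]^T$, as its coefficient would be a polynomial of negative degree. Thus $c_J=0$ for all such $J$ too, and altogether $c_K=\delta_{K,I}$, i.e.\ $\beta=[\Omega_I]^T$. Apart from the routine bookkeeping in the induction --- keeping track that the only ``higher'' Schubert class that can feed into $\iota_{p_J}^*\beta$ is $[\Omega_I]^T$ with coefficient already known to be $1$ --- this is all formal, so I expect the degree step above to be the only genuinely substantive point.
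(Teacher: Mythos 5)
The paper itself does not supply a proof of this proposition---it simply cites Knutson--Tao and Feh\'er--Rim\'anyi---so there is no text to compare against; I will just assess your argument.

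Your proof is correct, and you have put your finger on the one substantive point: as literally written, relations \eqref{eq:one} and \eqref{eq:two} alone do \emph{not} determine the class, since for any $J<I$ the class $[\Omega_I]^T + c\,[\Omega_J]^T$ (with $c\in\Lambda$ arbitrary) also satisfies both---indeed $\iota_{p_I}^*[\Omega_J]^T=0$ because $I\not\leq J$, and $\iota_{p_L}^*[\Omega_J]^T=0$ for $L\not\leq I$ because $L\leq J$ would force $L\leq I$. So the degree constraint you impose (homogeneity of degree $2\,\codim\Omega_I$, which \eqref{eq:one} forces once you declare $\beta$ homogeneous) is genuinely needed, and it is exactly what the cited references assume. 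Given that, the triangularity-plus-degree argument you run is the standard and correct one: \eqref{eq:two} makes $\iota^*$ upper-triangular in the Schubert basis, the diagonal entries $\iota_{p_J}^*[\Omega_J]^T$ are nonzero in the domain $\Lambda$ by \eqref{eq:one}, the constraints at $p_J$ for $J\not<I$ handle those coefficients, and the vanishing of $c_J$ for $J<I$---precisely the fixed points carrying no constraint---falls to the degree count, since $\codim\Omega_J>\codim\Omega_I$. One small expository slip: in the inductive step you describe the $K=I$ contribution as ``already evaluated,'' but when $J>I$ (or $J$ incomparable to $I$ and earlier in your total order) $c_I$ has not yet been reached; this is harmless since in exactly those cases $\iota_{p_J}^*[\Omega_I]^T=0$ anyway, so the term drops regardless of what $c_I$ is. You might also note that the interleaved induction is a bit heavier than necessary: the degree argument already kills $c_K$ for every $K$ with $\codim\Omega_K>\codim\Omega_I$ with no induction at all, after which a single pass down the remaining $K$'s (all satisfying $K\not<I$, hence carrying a constraint) suffices.
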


\subsection{Double Schur functions}

Every $k$-element subset $I\subset\{1, \ldots, n\}$ can be represented in a form of
a \emph{Young diagram}: one first draws a rectangle with $k$ rows and $n-k$ columns,
then draws a path from the upper-right to the lower-left corner; at $i$-th step,
we go downwards if $i\in I$ and to the left otherwise.  Counting the number of boxes in each row, we obtain a partition $\lambda = \lambda(I) = (\lambda_1 \geq \lambda_2 \geq \cdots \geq \lambda_k\geq 0)$.

\begin{example}
With $n=7$ and $k=3$, the subset $I=\{2,4,5\}$ corresponds to the partition $\lambda= (3, 2, 2)$,
whose Young diagram is below: 

\begin{center}
\pspicture(-100,-5)(150,40)
\psline{-}(0,10)(0,40)(40,40)(40,10)(0,10)
\graybox(0,30)
\graybox(10,30)
\graybox(20,30)
\graybox(0,20)
\graybox(10,20)
\graybox(0,10)
\graybox(10,10)
\psline[linewidth=1.5pt]{-}(0,10)(20,10)(20,30)(30,30)(30,40)(40,40)
\psline[linewidth=1.5pt]{<-}(20,30)(30,30)
\endpspicture
\end{center}
\end{example}

A \emph{semistandard Young tableau} (SSYT for short) on a Young diagram $\lambda$ 
is a way of filling the boxes of $\lambda$ with numbers from $\{1, \ldots, k\}$
such that they are weakly increasing along rows (going to the right) and strictly increasing along 
columns (going downwards).  We write $SSYT(\lambda)$ for the set of all SSYT on the diagram $\lambda$.

\begin{definition}
The \define{double Schur function} associated to $\lambda$ is a polynomial in two sets of variables,
$x = (x_1, x_2, \ldots)$ and $u = (u_1, u_2, \ldots)$, defined by the formula
\[
  s_\lambda(x | u) = \sum_{S\in SSYT(\lambda)} \prod_{(i, j)\in\lambda} \left( x_{S(i,j)} - u_{S(i,j)+j-i}\right).
\]
Here $S(i,j)$ is the $i,j$-entry of the tableau $S$, using matrix coordinates on the diagram $\lambda$.
\end{definition}

\begin{example}
There are $8$ semistandard Young tableaux on the diagram $\lambda =$\blankbox(0,3)\blankbox(10,3)\blankbox(0,-7) \rput(25,0){,}
\[ 
\begin{Young}
1 & 1 \cr 2 \cr
\end{Young}\quad  
\begin{Young}
1 & 1 \cr 3 \cr
\end{Young}\quad  
\begin{Young}
1 & 2 \cr 2 \cr
\end{Young}\quad  
\begin{Young}
1 & 2 \cr 3 \cr
\end{Young}\quad  
\begin{Young}
1 & 3 \cr 2 \cr
\end{Young}\quad  
\begin{Young}
1 & 3 \cr 3 \cr
\end{Young}\quad
\begin{Young}
2 & 2 \cr 3 \cr
\end{Young}\quad  
\begin{Young}
2 & 3 \cr 3 \cr
\end{Young},
\]
so the double Schur function is
\begin{align*}
  s_\lambda(x|u) &= (x_1-u_1)(x_1-u_2)(x_2-u_1) + (x_1-u_1)(x_1-u_2)(x_3-u_2) \\
                 &\quad +(x_1-u_1)(x_2-u_3)(x_2-u_1) + (x_1-u_1)(x_2-u_3)(x_3-u_2) \\
                 &\quad +(x_1-u_1)(x_3-u_4)(x_2-u_1) + (x_1-u_1)(x_3-u_4)(x_3-u_2) \\
                 &\quad +(x_2-u_2)(x_2-u_3)(x_3-u_2) + (x_2-u_2)(x_3-u_4)(x_3-u_2) .
\end{align*}
\end{example}

It is not obvious from the definition, but in fact $s_\lambda(x|u)$ is \emph{symmetric} in the $x$ variables.  A nice discussion of some properties of these functions can be found in \cite[6th Variation]{mac}; see also \cite{cll} for a more recent study.  A crucial fact for us is that they solve the same interpolation problem as the Schubert classes:

\begin{proposition}[Molev-Sagan \cite{MolevSagan1999}, Okounkov-Olshanski \cite{OkounkovOlshanski1997}]\label{p:ms}
Set
\[
  u_i = - t_{n+1-i}, \quad t_j^\lambda = - t_{i_j},
\]
where $I = \{i_1 < \cdots < i_k\}$ is the subset corresponding to $\lambda$.  Then
\begin{align*}
s_\lambda(t^\lambda| u) &= \prod_{i\in I\, j\notin I\,i<j} (t_j - t_i)  \\
\intertext{and}
s_\lambda(t^\mu | u) &= 0 \quad \text{ if }\mu\not\supseteq \lambda.
\end{align*}
\end{proposition}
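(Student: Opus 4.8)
The plan is to derive both identities in Proposition~\ref{p:ms} from the bialternant form of the double Schur function (Macdonald's ``sixth variation'', \cite{mac}),
\[
  s_\lambda(x_1,\dots,x_k\,|\,u)\;=\;\frac{\det\bigl[\,(x_i\,|\,u)^{\lambda_j+k-j}\,\bigr]_{1\le i,j\le k}}{\prod_{1\le i<j\le k}(x_i-x_j)}\,,\qquad (y\,|\,u)^r:=\prod_{p=1}^{r}(y-u_p)
\]
(with $(y\,|\,u)^0=1$), which is equivalent to the tableau definition above. The first step is bookkeeping. From the bijection $\lambda\leftrightarrow I$ one has $i_c=n-k+c-\lambda_c$, so, writing $I(\mu)=\{i'_1<\cdots<i'_k\}$ for the subset attached to a partition $\mu\subseteq k\times(n-k)$, the substitution $x_c\mapsto t^\mu_c=-t_{i'_c}$ together with $u_i=-t_{n+1-i}$ is precisely the evaluation $x_c=u_{m_c}$ with $m_c:=\mu_c+k-c+1$; note $m_1>m_2>\cdots>m_k$. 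The Vandermonde denominator becomes $\prod_{i<j}(t_{i'_j}-t_{i'_i})$, a nonzero polynomial, so everything reduces to the numerator determinant, whose $(c,j)$ entry is $(u_{m_c}\,|\,u)^{\beta_j}=\prod_{p=1}^{\beta_j}(u_{m_c}-u_p)$ with $\beta_j:=\lambda_j+k-j$.

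For the vanishing, note that this entry is zero exactly when $m_c\le\beta_j$, i.e.\ when $\alpha_c<\beta_j$ where $\alpha_c:=\mu_c+k-c$; both $(\alpha_c)$ and $(\beta_j)$ are strictly decreasing. If $\mu\not\supseteq\lambda$ I pick $c_0$ with $\lambda_{c_0}>\mu_{c_0}$, so $\beta_{c_0}>\alpha_{c_0}$. Then for every $c\ge c_0$ and every $j\le c_0$ we get $\alpha_c\le\alpha_{c_0}<\beta_{c_0}\le\beta_j$, so the $k-c_0+1$ rows $c_0,\dots,k$ have all their nonzero entries confined to the $k-c_0$ columns $c_0+1,\dots,k$. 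By pigeonhole no permutation hits a nonzero entry in each of those rows, so every term of the permutation expansion vanishes; hence the determinant is $0$ and $s_\lambda(t^\mu\,|\,u)=0$.

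For the normalization I take $\mu=\lambda$, so $\alpha_c=\beta_c$ and $m_c=\beta_c+1$. Now the $(c,j)$ entry vanishes for $c>j$, so the numerator matrix is upper triangular with determinant $\prod_{c=1}^{k}(u_{m_c}\,|\,u)^{\beta_c}$. For each $c<j$ one has $m_j=\beta_j+1\le\beta_c$, so the factor $(u_{m_c}-u_{m_j})$ sits inside $(u_{m_c}\,|\,u)^{\beta_c}$; pulling all such factors out of the product over $c$ reconstitutes the denominator $\prod_{c<j}(u_{m_c}-u_{m_j})$ and cancels it, leaving
\[
  s_\lambda(t^\lambda\,|\,u)\;=\;\prod_{c=1}^{k}\ \prod_{p\in\{1,\dots,\beta_c\}\setminus\{m_{c+1},\dots,m_k\}}(u_{m_c}-u_p)\,.
\]
Then one checks the translation back: since $n+1-m_c=n-k+c-\lambda_c=i_c$, the factor $(u_{m_c}-u_p)$ equals $t_{n+1-p}-t_{i_c}$, and as $p$ ranges over the indicated set, $j:=n+1-p$ ranges over $\{i_c+1,\dots,n\}\setminus\{i_{c+1},\dots,i_k\}$, which is exactly $\{\,j\notin I:j>i_c\,\}$. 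Collecting over all $c$ gives $\prod_{i\in I,\ j\notin I,\ i<j}(t_j-t_i)$; both products have $|\lambda|$ factors, matching the asserted formula.

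I expect the vanishing step to be the real obstacle, since it must kill every contribution at once, and the bialternant formula is what makes it painless here --- reducing it to the transparent fact that a ``staircase'' determinant with entries $(u_{m_c}\,|\,u)^{\beta_j}$ is nonzero only when the shifted sequences $(\alpha_c)$ and $(\beta_c)$ are termwise comparable. An alternative, closer to the combinatorial definition, would be to show directly that once $\lambda\not\subseteq\mu$ every semistandard tableau on $\lambda$ has a box whose factor is annihilated by the substitution, with the normalization then falling out of a telescoping cancellation among tableaux (already visible when $\lambda$ is a single box); but pinning down the offending box uniformly over all tableaux is exactly the delicate point, so I would prefer the determinantal route, at the cost only of invoking the (standard, cited) equivalence of the two descriptions of $s_\lambda(x\,|\,u)$.
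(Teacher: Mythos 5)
The paper itself does not prove Proposition~\ref{p:ms}; it simply cites Molev--Sagan and Okounkov--Olshanski, so there is no in-text argument to compare against. Your proof via the bialternant form is correct and self-contained modulo the cited equivalence of the tableau and determinantal definitions of $s_\lambda(x\,|\,u)$, which is standard (and which the paper itself alludes to in the final Remark of that subsection, pointing to \cite[6.7]{mac}). The bookkeeping checks out: from the lattice-path bijection one indeed gets $i_c=n-k+c-\lambda_c$, hence the substitution $x_c\mapsto t^\mu_c=-t_{i'_c}$ and $u_i=-t_{n+1-i}$ is exactly $x_c=u_{m_c}$ with $m_c=\mu_c+k-c+1$. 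The crucial point that $(u_{m_c}\,|\,u)^{\beta_j}$ vanishes precisely when $m_c\le\beta_j$ (i.e.\ $\alpha_c<\beta_j$) relies on the $u_i=-t_{n+1-i}$ being pairwise distinct indeterminates in $\Lambda$, which holds because $m_c$ and the indices $p\le\beta_j$ all stay in $\{1,\dots,n\}$. Given that, the pigeonhole argument for $\mu\not\supseteq\lambda$ is watertight: picking $c_0$ with $\lambda_{c_0}>\mu_{c_0}$ forces rows $c_0,\dots,k$ to have support only in columns $c_0+1,\dots,k$, so every permutation term dies. The normalization likewise works: for $\mu=\lambda$ the matrix is upper triangular, the specialized Vandermonde $\prod_{c<j}(u_{m_c}-u_{m_j})$ embeds factor-for-factor in the diagonal product $\prod_c(u_{m_c}\,|\,u)^{\beta_c}$ (since $m_j=\beta_j+1\le\beta_c$ for $c<j$), and after cancelling, reindexing by $j=n+1-p$ turns $\{1,\dots,\beta_c\}\setminus\{m_{c+1},\dots,m_k\}$ into $\{j\notin I:j>i_c\}$, giving exactly $\prod_{i\in I,\,j\notin I,\,i<j}(t_j-t_i)$. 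A worked check with the paper's example $I=\{2,4,5\}$, $n=7$, $k=3$ confirms each step. This is a clean and correct proof; the determinantal route is indeed the efficient way to dispatch the vanishing uniformly, which, as you note, is delicate to do directly from the tableau sum.
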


One can deduce a ``Giambelli'' formula for the Schubert classes.  When $\lambda$ is the partition corresponding to a subset $I$, let us write $\Omega_\lambda = \Omega_I$ for the corresponding Schubert variety.  Now observe that the stars in the echelon matrix form of $\Omega_\lambda^\circ$ naturally fit into the \emph{complement} of the diagram $\lambda$ inside the $k\times (n-k)$ rectangle.  Therefore the codimension of $\Omega_\lambda$ is equal to $|\lambda|=\sum_{i=1}^k \lambda_i$, the number of boxes in $\lambda$.  Moreover, $J\leq I$ with respect to the Bruhat order if and only if $\lambda \supseteq \mu$ as diagrams, where $\mu$ is the partition corresponding to $J$.

\begin{corollary}
Let $S$ be the tautological subbundle on $X=\Gr(k,\CC^n)$, let $x_1,\ldots,x_k$ be the equivariant Chern roots of the dual bundle $S^\vee$, and let $t_1,\ldots,t_n$ be the standard weights of $T=(\CC^*)^n$.  Using the substitution from Proposition~\ref{p:ms}, we have
\[
  [\Omega_\lambda]^T = s_\lambda(x|u)
\]
in $H_T^*X$.
\end{corollary}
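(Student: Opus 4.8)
The plan is to verify that the classes $s_\lambda(x|u) \in H_T^*X$ satisfy exactly the two interpolation conditions \eqref{eq:one} and \eqref{eq:two} that, by Proposition~\ref{p:kt}, uniquely characterize the Schubert classes $[\Omega_\lambda]^T$. The bridge between the combinatorics and the geometry is the restriction map $\iota^*_{p_I}\colon H_T^*X \to \Lambda$ to the fixed points, so the first task is to compute $\iota^*_{p_J}(s_\lambda(x|u))$ for every fixed point $p_J$.

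First I would pin down what the substitution in Proposition~\ref{p:ms} means geometrically. The Chern roots $x_1,\dots,x_k$ of $S^\vee$ restrict at the fixed point $p_J$ to the weights of $T$ on the fiber $S^\vee_{p_J}$; since $S_{p_J}$ is the coordinate subspace spanned by $\{e_i : i \in J\}$, the bundle $S^\vee$ restricts there to the representation with weights $\{-t_i : i \in J\}$, i.e. $x_m \mapsto -t_{i_m}$ where $J = \{i_1 < \cdots < i_k\}$. That is precisely the substitution $x \mapsto t^\mu$ for $\mu = \lambda(J)$, and likewise $u_i = -t_{n+1-i}$ is a fixed relabeling of the equivariant parameters. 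So $\iota^*_{p_J}(s_\lambda(x|u)) = s_\lambda(t^\mu|u)$ in the notation of Proposition~\ref{p:ms}. Applying that proposition: $\iota^*_{p_I}(s_\lambda(x|u)) = s_\lambda(t^\lambda|u) = \prod_{i\in I,\, j\notin I,\, i<j}(t_j - t_i)$, which matches \eqref{eq:one}; and $\iota^*_{p_J}(s_\lambda(x|u)) = s_\lambda(t^\mu|u) = 0$ whenever $\mu \not\supseteq \lambda$, which (using the dictionary $J \leq I \iff \mu \subseteq \lambda$ from the preceding paragraph of the text, equivalently $J \not\leq I \iff \mu \not\supseteq \lambda$) matches \eqref{eq:two}.

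The one genuine subtlety — and the step I expect to be the main obstacle — is the \emph{well-definedness} of the left-hand side as an element of $H_T^*X$ in the first place. The function $s_\lambda(x|u)$ is symmetric in $x_1,\dots,x_k$, so it is a polynomial in the equivariant Chern classes $c_i^T(S^\vee) = e_i(x)$ together with the $t$'s; these do lie in $H_T^*X$, so $s_\lambda(x|u)$ names a bona fide class, and its fixed-point restrictions are computed by restricting the Chern classes as above. I would state explicitly that this is why symmetry in $x$ is needed. Once well-definedness is granted, the argument is just: the class $s_\lambda(x|u)$ and the class $[\Omega_\lambda]^T$ have equal images under the injective map $\iota^*\colon H_T^*X \hookrightarrow H_T^*X^T = \bigoplus_J \Lambda$ (injectivity holds since $H_T^*X$ is free over $\Lambda$ of rank $\#X^T$, as recorded just before \eqref{eq:one}), hence they are equal. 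Equivalently, one may simply invoke Proposition~\ref{p:kt} directly, since both sides satisfy \eqref{eq:one} and \eqref{eq:two}. I would close by remarking that combining this corollary with the pullback diagram of the previous subsection recovers the Kempf–Laksov formula, with the double Schur function $s_\lambda(x|u)$ playing the role of the determinant $\Delta_\lambda(c(F-E))$ — indeed the equality of these two expressions is the ``sixth variation'' Jacobi–Trudi identity for $s_\lambda(x|u)$.
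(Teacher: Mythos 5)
Your proof is correct and takes essentially the same route as the paper: restrict $s_\lambda(x|u)$ to fixed points, identify those restrictions with $s_\lambda(t^\mu|u)$ via the weights of $S^\vee|_{p_J}$, and invoke Propositions~\ref{p:ms} and \ref{p:kt}. The only difference is that you spell out the well-definedness of $s_\lambda(x|u)$ as a class and keep the $S$ vs.~$S^\vee$ sign straight (the paper's one-line proof writes ``weights of $S|_{p_J}$'' where $S^\vee|_{p_J}$ is meant), neither of which changes the substance.
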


\begin{proof}
Using Propostions~\ref{p:kt} and \ref{p:ms}, it suffices to observe that the weights of $S|_{p_J}$ are $t^\mu_1,\ldots,t^\mu_k$ if $\mu$ is the partition corresponding to $J$, so $s_\lambda(t^\mu|u)$ is equal to the restriction $\iota_J^*s_\lambda(x|u)$.
\end{proof}

Finally, to relate this formula to the Kempf-Laksov formula, consider the sequence of vector bundles on $X$
\[
  E_1 \hookrightarrow E_2 \hookrightarrow \cdots \hookrightarrow E_n = \CC^n \xrightarrow{\phi} Q,
\]
where the $E_i$ are trivial bundles, spanned by $\{e_1,\ldots,e_i\}$, and $Q=\CC^n/S$ is the universal quotient bundle.  We claim that
\[
  \Omega_\lambda = D_\lambda(\phi).
\]
This is a standard fact, and the main point is that $D_\lambda(\phi)$ is irreducible; see, e.g., \cite{yt}.  One way to see this it as follows.  The group $B$ of upper-triangular matrices acts on $X$, and it preserves the $E_i$, so it also acts on each $D_\lambda(\phi)$.  On the other hand, from the matrix representatives, it is easy to see that $\Omega_\mu^\circ$ is the $B$-orbit of $p_J$ (for $J$ corresponding to $\mu$, as usual).  One checks that the conditions defining $D_\lambda$ are satisfied by $p_J$ if and only if $J\leq I$, and it follows that $D_\lambda$ is the union of the cells $\Omega_\mu^\circ$ for $\mu\supseteq\lambda$.

\begin{remark}
A determinantal formula for $s_\lambda(x|u)$ can be proved directly by algebraic means; see \cite[6.7]{mac}.  Using this, one obtains a new proof of the Kempf-Laksov formula.

A similar proof that the double Schur functions represent Schubert classes is given in \cite[\S5]{mihalcea}.
\end{remark}

\subsection{Positivity}\label{ss:pos}

Since the Schubert classes form a $\Lambda$-basis for $H_T^*X$, we can write
\begin{equation}\label{eq:coeffs}
  [\Omega_\lambda]^T\cdot [\Omega_\mu]^T = \sum_\nu c_{\lambda\mu}^\nu(t) [\Omega_\nu]^T,
\end{equation}
for some polynomials $c_{\lambda\mu}^\nu(t)\in\Lambda=\ZZ[t_1,\ldots,t_n]$, homogeneous of degree $|\lambda|+|\mu|-|\nu|$.  Under the map $H_T^*X\to H^*X$, one recovers the structure constants $c_{\lambda\mu}^\nu=c_{\lambda\mu}^\nu(0)$ for ordinary cohomology.  In fact, the integers $c_{\lambda\mu}^\nu$ are nonnegative---using the Kleiman-Bertini theorem, they count the number of points in a transverse intersection of generic translates of three Schubert varieties.  (There are also many combinatorial formulas for these numbers, which are called the {\em Littlewood-Richardson coefficients}.)

A remarkable fact about the equivariant coefficients $c_{\lambda\mu}^\nu(t)$ is that they also exhibit a certain positivity property:

\begin{theorem}\label{t:graham}
The polynomial $c_{\lambda\mu}^\nu(t)$ lies in $\ZZ_{\geq 0}[t_2-t_1,t_3-t_2,\ldots,t_n-t_{n-1}]$.
\end{theorem}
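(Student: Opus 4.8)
This is Graham's positivity theorem, and the plan is to reduce the statement to a geometric positivity result about equivariant classes on a suitable space and then extract the combinatorial consequence. The key idea is that the structure constants $c_{\lambda\mu}^\nu(t)$ can be realized as the class of an honest subvariety in an equivariant cohomology ring, so that positivity becomes a statement about effective classes being nonnegative combinations of products of positive weights. First I would set up the relevant geometry: the weights $t_2-t_1, t_3-t_2, \ldots, t_n-t_{n-1}$ are precisely the simple roots for $GL_n$ with respect to the Borel $B$ of upper-triangular matrices, so the natural framework is the $B$-action (equivalently the $T$-action, since $B$ retracts onto $T$) on $X = \Gr(k,\CC^n)$, where the Schubert varieties $\Omega_\lambda$ are exactly the $B$-invariant subvarieties. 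The heart of the matter is a general theorem of Graham: if a connected solvable group $B = T \ltimes U$ acts on a variety $Y$ with finitely many fixed points, then the class of any $B$-invariant subvariety expands in the Schubert-type basis with coefficients in $\ZZ_{\geq 0}[\text{positive roots}]$.

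The main steps, in order, would be: (1) Interpret $c_{\lambda\mu}^\nu(t)$ geometrically. Translating $\Omega_\mu$ by a generic element and intersecting with $\Omega_\lambda$ does not stay $B$-invariant, so instead one should pass to a larger space — for instance, realize the product $[\Omega_\lambda]^T \cdot [\Omega_\mu]^T$ via a correspondence. A clean way is to work on $G/B \times X$ or on a Bott--Samelson-type resolution: the coefficient $c_{\lambda\mu}^\nu(t)$ arises as (a component of) the pushforward of a $B$-invariant cycle, e.g. from the diagonal-type subvariety $\{(gB, x) : x \in \Omega_\lambda \cap g\Omega_\mu\}$ or from a desingularization thereof. (2) Invoke Graham's positivity theorem for $B$-invariant subvarieties of a $B$-variety with isolated fixed points, which gives that any effective $B$-invariant class, written in the basis of equivariant Schubert classes, has coefficients in $\ZZ_{\geq 0}[\alpha_1,\ldots,\alpha_{n-1}]$ where $\alpha_i = t_{i+1}-t_i$. (3) Check that the geometric construction in step (1) indeed produces such an effective $B$-invariant class whose expansion coefficients are the $c_{\lambda\mu}^\nu(t)$; this is where one uses that the Schubert basis is self-dual under the equivariant Poincaré pairing (up to the opposite Schubert classes), so that extracting the $\nu$-coefficient is itself a pushforward of an effective class.

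The main obstacle — and the technical core — is step (1) combined with the proof of the underlying positivity theorem in step (2): one must exhibit the structure constants as classes of genuine $B$-invariant subvarieties (not merely effective classes), and then prove that such classes are nonnegative in the simple roots. The proof of the latter is itself substantial: Graham's argument proceeds by induction on the length of the unipotent group $U$, using a filtration $U = U_1 \supset U_2 \supset \cdots$ with one-dimensional successive quotients, at each stage degenerating a $B$-invariant subvariety to one invariant under a smaller group and controlling the change in class by a single positive root via a careful analysis of the associated $\CC^*$- or $\mathbb{G}_a$-action and its fixed locus. I would either cite this theorem as a black box (it is due to Graham, building on earlier ideas of Kumar and of Kostant--Kumar) and devote the bulk of the exposition to step (1), or, if a self-contained treatment is wanted, sketch the induction and flag that the one-step case — moving a cycle along a $\mathbb{G}_a$-orbit and reading off the root from the weight of that orbit — is the crux. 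In a survey at this level I would state the theorem, explain that $t_{i+1}-t_i$ are the simple roots, indicate that the proof realizes the coefficients as classes of $B$-invariant subvarieties and applies Graham's theorem, and refer to the literature for the details.
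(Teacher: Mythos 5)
Your proposal is correct in outline, but it follows Graham's original degeneration argument, whereas the paper explicitly takes a different route (the transversality argument of \cite{and}, which it contrasts with Graham's degeneration). You would realize $c_{\lambda\mu}^\nu(t)$ as the pushforward of a $B$-invariant cycle and then invoke Graham's general principle that effective $B$-invariant cycles expand nonnegatively in the simple roots, establishing that principle by inducting along a filtration of the unipotent radical $U$ with one-dimensional quotients. The paper instead fixes a clever approximation $\BB_m = (\PP^m)^n$ together with the basis $t_1-t_2,\ldots,t_{n-1}-t_n,t_n$ of the character lattice, so that effective classes in $H^*\BB_m$ are automatically nonnegative integer combinations of monomials in the simple roots. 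It then constructs a connected group $\Gamma = G \rtimes \Gamma_0$ acting on $\EE_m\times^T X$ (with $G=(PGL_{m+1})^{\times n}$ acting transitively on the base and $\Gamma_0$ the group of global ``lower-triangular'' automorphisms of $E=L_1\oplus\cdots\oplus L_n$, which surjects onto $\tilde B$ fiberwise) under which the opposite Schubert bundles $\tilde\OOmega_\nu$ are orbits; Kleiman--Bertini then produces $\gamma\in\Gamma$ making $Z = \OOmega_\lambda\cap\gamma\cdot\OOmega_\mu\cap\tilde\OOmega_\nu$ a proper intersection with $[Z]=[\Omega_\lambda]^T[\Omega_\mu]^T[\tilde\Omega_\nu]^T$, and pushing $[Z]$ forward to $\BB_m$ gives $c_{\lambda\mu}^\nu(t)$ directly as an effective class. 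This sidesteps the degeneration machinery and, notably, the need to exhibit the coefficient as the class of a $B$-invariant cycle (the translate $\gamma\cdot\OOmega_\mu$ is not $B$-invariant). What your route buys is generality --- Graham's theorem applies to arbitrary $B$-varieties and arbitrary $B$-invariant cycles --- while the paper's transversality argument buys a shorter, essentially self-contained proof that fits naturally with the survey's earlier development of approximation spaces and the integration formula. The paper also records an intermediate option you could fold into your comparison: once $\Gamma$ has a unipotent subgroup with finitely many orbits on $\EE_m\times^T X$, positivity already follows from the theorem of Kumar--Nori \cite{kn}, without invoking Kleiman--Bertini.
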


This is a special case of a theorem of W.~Graham \cite{gr-pos}, who used a degeneration argument.  We will sketch a different proof based on a transversality argument, from \cite{and}.

\begin{proofof}{Sketch of proof}
First we observe that there are equivariant Poincar\'e dual classes, given by the \emph{opposite} Schubert varieties.  Specifically, let $\tilde{E}_i$ be the span of $\{e_n,e_{n-1},\ldots,e_{n+1-i}\}$, and  the sequence
\[
  \tilde{E}_1 \hookrightarrow \tilde{E}_2 \hookrightarrow \cdots \hookrightarrow \tilde{E}_n = \CC^n \xrightarrow{\tilde\phi} Q.
\]
Define $\tilde\Omega_\lambda$ to be the degeneracy locus $D_{\tilde\lambda}(\tilde\phi)$ for this sequence, where $\tilde\lambda$ is the partition whose shape (rotated $180$ degrees) is the complement to $\lambda$ inside the $k\times(n-k)$ rectangle.  Equivalently, let $\tilde{B}$ be the subgroup of lower triangular matrices; then $\tilde\Omega_\lambda$ is the closure of the cell $\tilde\Omega_\lambda^\circ$ obtained as the $\tilde{B}$-orbit of $p_I$.

From the description as a $\tilde{B}$-orbit closure, it is easy to see that the intersection $\Omega_\lambda \cap \tilde\Omega_\lambda$ consists of the single point $p_I$, and is transverse there.  Applying the integration formula, we see that
\[
  \rho_*([\Omega_\lambda]^T\cdot[\tilde\Omega_\mu]^T) = \delta_{\lambda\mu}
\]
in $\Lambda$, and applying this to both sides of \eqref{eq:coeffs},
\[
  c_{\lambda\mu}^\nu(t) = \rho_*([\Omega_\lambda]^T\cdot[\Omega_\mu]^T\cdot[\tilde\Omega_\nu]^T).
\]
The idea of the proof is find a subvariety $Z$ in the approximation space $\EE_m \times^T X$ whose class is equal to $[\Omega_\lambda]^T\cdot[\Omega_\mu]^T\cdot[\tilde\Omega_\nu]^T$, for a special choice of approximation $\EE_m \to \BB_m$.  Pushing forward the class of such a $Z$ yields an effective class in $H^*\BB_m$, which corresponds to $c_{\lambda\mu}^\nu(t)$.

To set this up, fix an isomorphism $T\isom (\CC^*)^n$ using the following basis for the character group:
\[
  t_1-t_2,\,t_2-t_3,\ldots,\,t_{n-1}-t_n,\,t_n.
\]
(The reason for this choice will become clear later.)  Now for $m\gg0$, take $\BB_m=(\PP^m)^n$, and write $M_i = \Oo_i(-1)$ for the pullback via the $i$th projection, as in Example~\ref{ex:Tn}.  Identifying $H^*\BB_m$ with $\Lambda=\ZZ[t_1,\ldots,t_n]$, we have $c_1(M_i) = t_i-t_{i+1}$ for $1\leq i\leq n-1$, and $c_1(\Oo_n(-1))=t_n$.

Note that every effective class in $H^*\BB_m$ is a nonnegative linear combination of monomials in $t_2-t_1,\ldots,t_n-t_{n-1},-t_n$, so if we find $Z$ as above, the positivity theorem will be proved.  (In fact, the whole setup may be pulled back via the projection onto the first $n-1$ factors of $\BB_m=(\PP^m)^n$, so that $t_n$ does not contribute, and the class lies in the claimed subring of $\Lambda$.)

To find $Z$, we construct a group action on the approximation space.  First some notation.  Set $L_i=M_i\otimes M_{i+1}\otimes \cdots \otimes M_n$, so $c_1(L_i) = t_i$.  Using the standard action of $T$ on $\CC^n$, we have $\EE_m\times^T \CC^n = L_1\oplus \cdots \oplus L_n = E$ as vector bundles on $\BB_m$.  Moreover, the flags used to define the Schubert varieties are replaced with $E_i = L_1\oplus \cdots \oplus L_i$ and $\tilde{E}_i = L_n \oplus \cdots \oplus L_{n+1-i}$, and
\[
  \OOmega_\lambda = \EE_m\times^T \Omega_\lambda \quad \text{ and } \tilde\OOmega_\lambda = \EE_m \times^T\tilde\Omega_\lambda
\]
are corresponding degeneracy loci.

The key observation is that the vector bundle $End(E) = \bigoplus_{i,j} L_j^\vee\otimes L_i$ has global sections in ``lower-triangular matrices'', since when $i\geq j$, the line bundle $L_j^\vee\otimes L_i = M_j^\vee \otimes \cdots M_{i-1}^\vee$ is globally generated.  The bundle of invertible endomorphisms $Aut(E) \subset End(E)$ is a group scheme over $\BB_m$, and its group of global sections $\Gamma_0$ maps surjectively onto $\tilde{B}$ by evaluation at each fiber.  Including the action of the group $G=(PGL_{m+1})^{\times n}$ acting transitively on the base $\BB_m$, it follows that the opposite Schubert bundles $\tilde\OOmega_\lambda$ are the orbits for a connected group $\Gamma=G\rtimes \Gamma_0$ acting on $\EE_m\times^T X$.

An application of the Kleiman-Bertini theorem guarantees that there is an element $\gamma\in\Gamma$ such that $Z = \OOmega_\lambda \cap \gamma\cdot\OOmega_\mu \cap \tilde\OOmega_\nu$ is a proper intersection, so it has codimension $|\lambda|+|\mu|-|\nu|+\dim X$ in $\EE_m\times^T X$.  By construction, $[Z]=[\OOmega_\lambda]\cdot[\OOmega_\mu]\cdot[\tilde\OOmega_\nu] = [\Omega_\lambda]^T\cdot[\Omega_\mu]^T\cdot[\tilde\Omega_\nu]^T$, so we are done.
\end{proofof}

\begin{remark}
A similar argument works to establish positivity for any homogeneous space $X=G/P$, so one can recover the general case of Graham's theorem.  An intermediate approach is to observe that the group $\Gamma$ has a unipotent subgroup with finitely many orbits on $\EE_m \times^T X$, so a theorem of Kumar and Nori \cite{kn} guarantees positivity.
\end{remark}

\begin{remark}
From the point of view of degeneracy loci, the coefficients $c_{\lambda\mu}^\nu(t)$ are universal structure constants.  That is, for a fixed degeneracy problem $E_1 \hookrightarrow \cdots \hookrightarrow E_n=E \xrightarrow{\phi} F$ on an arbitrary (Cohen-Macaulay) variety $X$, the subring of $H^*X$ generated by the classes $[D_\lambda(\phi)]$ has structure constants $c_{\lambda\mu}^\nu(u)$, where $u_i=c_1(E_i/E_{i-1})$ is the pullback of $t_i$ by the classifying map $X \to \BB B$.  The positivity theorem therefore implies a corresponding positivity for intersections of degeneracy loci on any variety.
\end{remark}

\subsection{Other directions}

A great deal of recent work in algebraic geometry involves equivariant techniques, either directly or indirectly.  Without pretending to give a complete survey of this work, we conclude by mentioning a small sampling of these further applications.

\bigskip
\noindent
{\bf Generalized Schubert calculus.}  
In the case of the Grassmannian, there are combinatorial formulas for the equivariant coefficients $c_{\lambda\mu}^\nu(t)$, due to Knutson and Tao \cite{KnutsonTao2003} and Molev \cite{molev}.  However, the question remains open for other homogeneous spaces $G/P$, even in the case of ordinary cohomology!  It can happen that the extra structure present in equivariant cohomology simplifies proofs; such is the case in \cite{KnutsonTao2003}.  Some formulas for ordinary cohomology of \emph{cominiscule} $G/P$ were given by Thomas and Yong \cite{ty}.

\bigskip
\noindent
{\bf Degeneracy locus formulas.}  
Extending the Kempf-Laksov formula, there are formulas for {\em (skew-)symmetric degeneracy loci}, due to J\'ozefiak-Lascoux-Pragacz \cite{jlp}, Harris-Tu \cite{ht}, Lascoux-Pragacz \cite{lp}, and others.  These are equivalent to equivariant Giambelli formulas in the Lagrangian and orthogonal Grassmannians, which are homogeneous spaces $Sp_{2n}/P$ and $SO_{2n}/P$.  Working directly in equivariant cohomology, Ikeda et al. have given equivariant Giambelli formulas for these spaces \cite{imn}.

\bigskip
\noindent
{\bf Thom polynomials.}  
The theory of Thom polynomials is one of the origins of equivariant techniques in algebraic geometry; see, e.g., \cite{FeherRimanyi2003}.  These are universal polynomials related to singularities of mappings, and can be interpreted as equivariant classes of certain orbit closures: for an algebraic group $G$ acting linearly on a vector space $V$, one has $[\overline{G\cdot v}]^T$ in $H_T^*(V)\isom\Lambda$.  They are difficult to compute in general, and have been studied recently by Feh\'er-Rim\'anyi \cite{FeherRimanyi2003}, Kazarian \cite{kazarian}, and Pragacz-Weber \cite{pw}, among others.  An important special case is where $G$ is the product of two groups of upper-triangular matrices, acting on the space of $n\times n$ matrices; a detailed combinatorial study of this was carried out by Knutson-Miller \cite{km}.





\noindent
\footnotesize{\textsc{Department of Mathematics, University of Washington, Seattle, WA 98195}}

\end{document}